\documentclass[a4paper,11pt]{article}
\usepackage{amsmath}                    
\usepackage{amssymb}                    
\usepackage{amsthm}                     
\usepackage{amsfonts}                   
\usepackage{subfigure}
\usepackage{color}
\usepackage{graphics,graphicx}
\usepackage{enumerate}
\usepackage{epsfig}
\usepackage{GGGraphics}

\usepackage{multicol}


\allowdisplaybreaks

\newtheorem{theorem}{Theorem}[section]

\newtheorem{remark}[theorem]{Remark}
\newtheorem{ass}[theorem]{Assumption}

\textheight 22.6 cm \topmargin -2.5 mm 
\textwidth 15.2 cm \oddsidemargin 0.15 cm

\def\ts{\thinspace}

\def\qed{\hfill $\square$ \goodbreak \bigskip}

\DeclareMathOperator{\ddiv}{div}

\renewcommand{\epsilon}{\varepsilon}
\newcommand{\eps}{\varepsilon}

\newcommand{\R}{\mathbb{R}}
\newcommand{\bigo}{\mathcal{O}}
\newcommand{\CC}{\mathcal{C}}
\newcommand{\cN}{\mathcal{N}}

\def \IR{\mathbb R}

\def \IE{\mathbb E}

\def \eps{\epsilon}


\title{
Postprocessed integrators for the high order \\
integration of ergodic SDEs
}


\author{ 
Gilles Vilmart\textsuperscript{1}
}

\date{November 7, 2014}

\begin{document}
\footnotetext[1]{
Universit\'e de Gen\`eve, Section de math\'ematiques, 2-4 rue du Li\`evre, CP 64, 1211 Gen\`eve 4.
Gilles.Vilmart@unige.ch}

\maketitle

\begin{abstract}

The concept of effective order is a popular methodology in the deterministic literature 
for the construction of efficient and accurate integrators for differential equations over long times.
The idea is to enhance the accuracy of a numerical method by using an appropriate change
of variables called the processor. 
We show that this technique can be extended to the stochastic context
for the construction of new high order integrators for the sampling 
of the invariant measure of ergodic systems. 
The approach is illustrated with modifications of the stochastic $\theta$-method applied to Brownian dynamics,
where postprocessors achieving order two are introduced. 
Numerical experiments, including stiff ergodic systems,
illustrate the efficiency and versatility of the approach.

\smallskip

\noindent
{\it Keywords:\,}
stochastic differential equations, effective order, postprocessor,
modified differential equations, invariant measure, ergodicity.
\smallskip

\noindent
{\it AMS subject classification (2010):\,}
65C30, 60H35, 37M25
 \end{abstract}


\section{Introduction}

A popular technique in the deterministic literature for the construction of efficient and high order integrators for differential equations
is the notion of effective order of Butcher~\cite{butcher69teo}.
Inspired by this idea, we introduce in this paper a new methodology for the construction of high order integrators for the invariant measure of ergodic systems of stochastic differential equations (SDEs).
Given a deterministic problem 
\begin{equation} \label{eq:ode}
\frac{dy(t)}{dt}=f(y(t)),
\end{equation}
with a smooth vector field $f:\IR^d\rightarrow \IR^d$, the idea is to search for a computationally efficient integrator $K_h$ for \eqref{eq:ode}, named the kernel,
and perturbations of the identity map $\chi_h$ and $\chi_h^{-1}$, inverse of each other, and called respectively the postprocessor and preprocessors, such that the integrator $y_{n+1}=\Phi_h(y_n)$ defined by
$$
\Phi_h = \chi_h \circ K_h \circ \chi_h^{-1}
$$
has a higher order of accuracy than $K_h$. Considering a constant stepsize $h$ yields
\begin{equation}\label{eq:deteffective}
\Phi_h^N = \chi_h \circ K_h^N \circ \chi_h^{-1},
\end{equation}
which means that $N$ compositions of $\Phi_h$ can be efficiently implemented by applying $N$ steps of the kernel $K_h$ and 
only once the processor maps $\chi_h^{-1},\chi_h$, respectively at the beginning and the end of the integration.
This approach is efficient if the computational cost of the kernel $K_h$ is cheaper than the cost of $\Phi_h$.
It was first introduced in \cite{butcher69teo} as a mean to construct $5$th order explicit Runge-Kutta methods with only $5$ internal stages, corresponding to the number of function evaluations per time step.
A renewed interest for this idea appeared in the context of deterministic geometric numerical integration \cite{SaC94,LeR04,HLW06}, where the use of a constant stepsize is required in general, for the construction of high order integrators with a favourable long time behaviour 
 (see e.g. \cite[Sect.\ts V.3.3]{HLW06} and references therein).

In this paper, we consider a system of (It\^o) stochastic differential equations 
\begin{equation} \label{e:main_sde}
dX(t)=f(X(t))dt+g(X(t))dW(t), \quad X(0)=X_{0},
\end{equation}
where $f:\R^d\rightarrow \R^d$ and $g:\R^d\rightarrow \R^{d\times m}$
are smooth fields assumed globally Lipchitz for simplicity, and $W(t)=(W_1(t),\ldots,W_m(t))^T$ is a standard $m$-dimensional Wiener process.
Under appropriate smoothness and growth assumptions on the fields $f,g$, one can show that
\eqref{eq:Brownian} has ergodic dynamics, i.e. for an arbitrary initial condition $X_0=x$, 
assumed deterministic for simplicity, the time average of the trajectory 
satisfies with probability $1$,
$$
\lim_{T \rightarrow \infty}\frac{1}{T}\int_{0}^{T} \phi(X(t)) = \int_{\IR^{d}}\phi(y) d\mu(y).
$$
for all smooth test functions $\phi$ with derivatives at all orders with polynomial growth.
In addition, in many situations, one can show the following exponential ergodicity property for all $t\geq 0$,
\begin{equation} \label{eq:expergo}
\left|\IE(\phi(X(t))) - \int_{\IR^d} \phi(y) d\mu(y) \right| \leq K(x)e^{-\lambda t} 
\end{equation}
where $\lambda>0$ is independent of $x,t$ and $K(x)$ depends on $\phi$ and the initial condition $X_0=x$ (assume deterministic for simplicity), but is independent of $t$.

Consider a one-step numerical integrator for \eqref{e:main_sde},
\begin{equation}\label{eq:methnum}
X_{n+1} = \Psi(X_{n},h,\xi_{n}).
\end{equation}
We say that numerical method $X_n$ for the approximation of \eqref{eq:Brownian} at time $t_n=nh$ is
ergodic if it has a unique invariant probability law $\mu^h$ with finite moments of any order and with probability $1$,
\begin{equation} \label{eq:ergodic_num}
\lim_{N \rightarrow \infty} \frac{1}{N+1}\sum_{n=0}^{N} \phi(X_{n})=\int_{\R^d}\phi(y)d\mu^h(y),
\end{equation}
for all initial condition $X_0=x$ and all test functions $\phi$.
We say that the scheme has order $r\geq1$ with respect to the invariant measure if
\begin{equation} \label{eq:difference1i}
|e(\phi,h)| \leq C h^{r}
\quad\mbox{with} \quad
e(\phi,h) := \lim_{N \rightarrow \infty} \frac{1}{N+1}\sum_{n=0}^{N} \phi(X_{n})-\int_{\R^d}\phi(y)d\mu(y),
\end{equation} 
where $C$ is independent of $h$ small enough and $X_0$.
Analogously to \eqref{eq:expergo}, in many situations, one can show for all $t_n=nh$ and all initial condition $X_0=x$, the estimate
\begin{equation} \label{eq:difference1iexp}
\left|\IE(\phi(X_n)) - \int_{\IR^d} \phi(y) d\mu(y)\right| \leq K(x)e^{-\lambda t_n} + C h^r
\end{equation} 
where $C,K(x)$ are independent of $n,h$.

A standard approach for the construction of integrators with high order 
for the invariant measure is to design methods with high weak order of accuracy. This is known for large classes of ergodic SDEs 
in particular for a degenerate noise and locally Lipschitz fields in
\cite{MSH02}.
This can be combined with extrapolation techniques from 
Talay \cite{TaT90}.
It is known, however, that high order can still be achieved using low weak order methods, as shown in particular for certain classes of splitting methods for nonlinear Langevin dynamics \cite{LM13,LMS13,BRO10,AVZ14b}.
We also mention the exact schemes based on Markov Chain Monte-Carlo methods, see e.g. the survey \cite{San14}.
An approach proposed in \cite{AVZ14a,AVZ14b} permits 
to design methods with a high order for the invariant measure despite a low deterministic order of accuracy. 
It relies on recent developments in the stochastic context \cite{ACV12,Zyg11,DeF12,Kop13a,Kop13b} of the theory of backward error analysis and modified differential equations, a fundamental tool in the field of deterministic geometric numerical integration \cite{SaC94,LeR04,HLW06}. 
In the same spirit, the proposed approach of postprocessed integrators, based on Butcher's effective order \cite{butcher69teo},
relies on many ideas from \cite{AVZ14a}.

We shall illustrate our approach at the example of Brownian dynamics generated by the overdamped Langevin equation
\begin{equation} \label{eq:Brownian}
dX({t})=-\nabla V(X(t))dt+ \sigma dW(t),
\end{equation}
where $V : \IR^d \rightarrow \R$ is a smooth potential, $\sigma>0$ is a constant, 
and $W=(W_1,\ldots,W_d)^T$ is a standard $d$-dimensional Wiener process.
This model describes the motion of a particle in a potential subject to thermal noise \cite{RIS84,GAR85}. Under appropriate 
assumptions on the potential $V$, the system is ergodic and has a unique
 invariant measure $\mu$ which is characterized by the Gibbs density function (sometimes called the Boltzmann density function)
\begin{equation} \label{eq:rhoexp0}
\rho(x) = Ze^{-\frac{2}{\sigma^2}V(x)},
\end{equation}
where $Z$ is a renormalization constant such that $\int_{\IR^d} \rho(y) dy = 1$. 
We show that
our approach is closely related to an efficient non-Markovian method of
order two proposed in \cite{LM13} for \eqref{eq:Brownian},
with convergence properties first analyzed in \cite{LMT14}. 
It was obtained considering the large friction limit in a particular splitting method for Langevin dynamics. 
We show in this paper that this scheme
can be interpreted as Markovian up to a change of variable, which is precisely the postprocessor map.

This paper is organized as follows. In Section \ref{sec:meth}, we introduce our new
methodology to extend Butcher's effective order technique to the stochastic context and we provide examples of postprocessed integrators.
In Section \ref{sec:prelim}, we recall standard assumptions that guaranty
the ergodicity of the system and we state the assumptions on the numerical methods needed for our analysis.
In Section \ref{sec:main}, we state our main results and derive conditions on a postprocessor to achieve high order for the invariant measure of a given integrator with emphasis on the case of Brownian dynamics.
Finally, the Section \ref{sec:num} is devoted to the numerical experiments,
and shows the efficiency and versatility of the approach for various stiff and nonstiff Brownian dynamics models.

\section{Postprocessed integrators}
\label{sec:meth}

In this section, we describe the idea of postprocessed integrators for an ergodic system of SDEs \eqref{e:main_sde}. 
Given a smooth test function $\phi$, we recall that there are two standard strategies when computing ergodic averages $\int_{\R^d}\phi(y)d\mu(y)$
using a one step integrator \eqref{eq:methnum} to compute numerical realizations of \eqref{e:main_sde}.
\begin{enumerate}
\item Given an initial condition $X_0$, one can consider a single 
numerical trajectory $X_0,X_1,X_2,\ldots$ over a long time interval of size $T=Nh$, and then compute the approximation
$$
\frac{1}{N+1}\sum_{k=0}^{N} \phi(X_{k}) \simeq \int_{\R^d}\phi(y)d\mu(y),
$$
taking advantage of the estimate \eqref{eq:difference1i}.
\item Alternatively, one can compute a large number $M$
of independent numerical trajectories $\{X_0^i,X_1^i,X_2^i,\ldots,X_N^i\}$, $i=1\ldots,M$ 
on a relatively shorter time interval $T=Nh$
and use the approximation
$$
\frac{1}{M}\sum_{i=1}^{M} \phi(X_N^i) \simeq \int_{\R^d}\phi(y)d\mu(y),
$$
based on the exponential estimate \eqref{eq:difference1iexp}.
The advantage of this second approach, as shown in \cite{MT07} in the context of ergodic SDEs, is that it permits to treat easily the case of non-globally Lipchitz vector fields using standard integrators (in particular explicit ones) by using \textit{the concept of rejecting exploding trajectories}, a rigorous methodology introduced in \cite{MT05}.
This procedure is often preceded by an equilibration period to make the law of $X_0^i$ close to the invariant measure $\mu$, see e.g. \cite{MT07} for details.
\end{enumerate}

The idea of postprocessed integrators is to consider an
integrator \eqref{eq:methnum} with order $r$ for the invariant measure in \eqref{eq:difference1i} and \eqref{eq:difference1iexp},
and to search for a 
postprocessor $\overline X_n = G_n(X_n)$, which can be a deterministic or a stochastic perturbation of $X_n$, such that $\overline X_n$
has a higher order of accuracy $\overline r>r$ for the invariant measure. Precisely, for all  $h$ assumed small enough, and all $t_n=nh$,
\begin{eqnarray} \label{eq:difference1iG}
\left| \lim_{N \rightarrow \infty} \frac{1}{N+1}\sum_{k=0}^{N} \phi(\overline X_{k})-\int_{\R^d}\phi(y)d\mu(y) \right| &\leq& C h^{\overline r}
\\
\left|\IE(\phi(\overline X_n)) - \int_{\IR^d} \phi(y) d\mu(y) \right| &\leq& K(x)e^{-\lambda t_n} + C h^{\overline r}, \label{eq:difference1iexpG}
\end{eqnarray}
where $C,\lambda$ are independent of $h,n$ and the initial condition $X_0=x$,
and $K(x)$ is independent of $h,n$.

If ergodic averages are computed using the estimate \eqref{eq:difference1iG},
then the postprocessor $\overline X_k=G_k(X_k)$ is applied at every time step $k$, but it has a negligible overcost in some situations where no additional function evaluations are needed as shown below. In addition $\overline X_k$ can always be computed in parallel with $X_{k+1}$, independently.
If alternatively, ergodic averages are approximated based on the exponential estimate \eqref{eq:difference1iexpG}, then the postprocessor $\overline X_N=G_N(X_N)$ is applied only once at the end time $t_N=Nh$ of each numerical trajectory,  resulting in a negligible overhead.

\begin{remark}
Compared to the deterministic setting \eqref{eq:deteffective},
$G_n$ plays the role of the postprocessor map $\chi_h$. In contrast, notice however that no preprocessor $\chi_h^{-1}$ is involved in the stochastic setting. This is not surprising because the limit $\IE(\phi(\overline X(t))) \rightarrow \int_{\IR^d} \phi(y) \mu(y)dy$ for $t\rightarrow \infty$ is independent of the initial condition $X_0=x$. In other words, the ergodic 
systems ``forgets'' the initial condition. The advantage compared to the deterministic case is that the postprocessor map $G_n$ needs not to be reversible, it can be  in particular the  flow of a stiff deterministic problem, or even a stochastic problem.
\end{remark}

To illustrate our new approach, we focus on the problem of Brownian dynamics
\eqref{eq:Brownian}, and  we consider the standard stochastic $\theta$-method
\begin{equation} \label{eq:thetastd}
X_{n+1} = X_n + h  (1-\theta)f(X_n) 
+ h \theta f(X_{n+1}) + \sigma \sqrt{h} \xi_n,
\end{equation}
where $f=-\nabla V$ and $\xi_n\sim\cN(0,I)$ are independent Gaussian random vectors with dimension $d$.
For $\theta=0$, it reduces to the
Euler-Maruyama method, while for $\theta\ne 0$, the method is implicit and requires 
in general the resolution of a non-linear system at each timestep.
For $\theta\geq 1/2$, it is shown in \cite{Hig00} that the $\theta$-method is mean-square $A$-stable, a property which makes it well-suited for stiff 
mean-square stable stochastic problems. 
This scheme has weak order one of accuracy, except in the special case $\theta=1/2$ for which the weak order of the method is two. 
Is was shown in \cite{Sch99b} that the $\theta$-method preserves exactly the invariant measure in the linear case if and only if $\theta=1/2$ (i.e. for $V(x)$ a quadratic function in \eqref{eq:Brownian}, which corresponds to an Orstein-Uhlenbeck process).
This result was extended in \cite{BuB12} for linear second order problems with additive noise, where it was shown that the $\theta=1/2$-method is in fact the only measure exact scheme in a certain class of stochastic Runge-Kutta methods involving only one Wiener increment per step. 

For multi-dimensional nonlinear problems of the form \eqref{eq:Brownian}, we introduce the following postprocessed methods of order $2$ for the invariant measure based on modifications of the $\theta$-method \eqref{eq:thetastd}. Precisely, we prove in Section \ref{sec:main} that 
for each method below, $\overline X_n$ has order $\overline r=2$ for the invariant measure in \eqref{eq:difference1iG},\eqref{eq:difference1iexpG}. 
For $\theta=0$, we obtain the following modification of the Euler-Maruyama method, which is equivalent to a method first proposed in  \cite{LM13}  (see \eqref{eq:non-Markovian} below)
\begin{equation} \label{eq:non-Markovian0}
X_{n+1} = X_n + h  f\left(X_n + \frac12 \sigma \sqrt{h} \xi_n\right) 
 + \sigma \sqrt{h} \xi_n,
\qquad
\overline X_n=  X_n + \frac12 \sigma \sqrt{h} \xi_n.
\end{equation}
For $\theta=1$, we derive the new scheme
\begin{equation} \label{eq:theta1}
X_{n+1} = X_n + h  f(X_{n+1} + a \sigma \sqrt{h} \xi_n) 
 + \sigma \sqrt{h} \xi_n,\quad
a= -\frac{1}2+\frac{\sqrt{2}}2,
\end{equation}
together with two possible postprocessors
\begin{equation} \label{eq:poststo}
\overline X_n=  X_n + c \sigma \sqrt{h} J_n^{-1}\xi_n,
\quad c= \frac{\sqrt{{2\sqrt{2}}-1}}2
\end{equation}
and alternatively,
\begin{equation} \label{eq:postdet}
\overline X_n=  X_n + h b f(\overline X_{n})  + c \sigma \sqrt{h} \xi_n,\quad
b=\frac{\sqrt{2}}2,\quad c=\frac{\sqrt{{4\sqrt{2}}-1}}2.
\end{equation}
The matrix $J_n^{-1} $ in \eqref{eq:poststo} is defined as the inverse of the $d\times d$ Jacobian
matrix 
$$J_n=I-hf'(X_n+a\sigma \sqrt h \xi_{n-1}).$$

\begin{remark}
The matrix $J_n$ is involved in standard Newton iterations needed 
to solve the nonlinear system \eqref{eq:theta1}. 
For an efficient implementation, it is known that one should never compute the inverse $J_n^{-1}$ itself but rather solve the appropriate linear systems.
Using a precomputed LU decomposition of this matrix, the evaluation of $J_n^{-1}\xi_n$ in \eqref{eq:poststo} therefore has a negligible cost. This term is introduced only to improve the numerical
stability and does not influence the order $2$ of accuracy of the postprocessed method \eqref{eq:theta1},\eqref{eq:poststo}, as discussed in Section \ref{sec:main} (Theorem \ref{thm:thetameth}).
The use of Jacobian matrices such as $J_n$ for stabilization is classical for stiff deterministic problems \cite[Sect.\ts IV.8]{HaW96}, and reveals efficient also in the context of mean-square stable stiff SDEs in \cite{AVZ13b}. 
\end{remark}

Substituting $X_n=\overline X_n -  \frac12 \sigma \sqrt{h} \xi_n$ in 
\eqref{eq:non-Markovian0}, a calculation shows that the scheme \eqref{eq:non-Markovian0} is equivalent to the method
\begin{equation} \label{eq:non-Markovian}
\overline X_{n+1} = \overline X_{n} + h  f(\overline X_{n}) + \frac12 \sigma \sqrt{h} (\xi_n+\xi_{n+1}).
\end{equation}
The scheme \eqref{eq:non-Markovian} of order two, which has the same cost as the standard Euler-Maruyama method of order one,
was first introduced in \cite{LM13} as a non-Markovian method. 
A rigorous analysis 
of its order two of convergence for the numerical invariant measure of \eqref{eq:Brownian} was first proposed in \cite{LMT14}. Notice that the point of view of postprocessed integrators provides here an alternative
proof of the order two of accuracy of the scheme \eqref{eq:non-Markovian}.
Analogously, notice that a non-Markovian formulation of the postprocessed method \eqref{eq:theta1},\eqref{eq:poststo} can also be written,
$$
\overline X_{n+1}  = \overline X_{n}  
+ h  f(\overline X_{n+1} +  \sigma \sqrt{h} (a\xi_n  - c \xi_{n+1})) 
+ \sigma \sqrt{h} ((1-c) \xi_n + c   \xi_{n+1}),
$$
where $a,c$ are defined in  \eqref{eq:theta1},\eqref{eq:poststo},
and we omit for simplicity the stabilization term~$J_n$. 

\section{Background}
\label{sec:prelim}

We recall in this section standard results and assumptions
that will be useful for the analysis of postprocessed integrators in Section \ref{sec:main}.
We assume that $f,g$ in \eqref{e:main_sde} are smooth fields and we focus for simplicity on the case of globally Lipchitz vector fields.

We denote $\CC_P^\infty(\IR^d,\IR)$ the set of $\CC^\infty$ functions whose derivatives
up to any order have a polynomial growth of the form
\begin{equation} \label{eq:phi_assumption}
|\phi(x)| \leq C (1+|x|^{s})
\end{equation}
for some constants $s$ and $C$ independent of $x$.
A classical fundamental tool for the analysis of \eqref{e:main_sde}
is the backward Kolmogorov equation \cite[Chap.\ts 2]{MiT04}.
Given $\phi \in \CC_P^\infty(\IR^d,\IR)$ and setting $u(t,x)=\mathbb{E}\left( \phi(X(t))|X_{0}=x\right)$, it yields that
$u(x,t)$ is the solution of the deterministic parabolic PDE
\begin{equation} \label{eq:Kolmogorov}
\frac{\partial u}{\partial t} = \mathcal{L}u, \qquad
u(x,0) = \phi(x), \qquad x\in\R^d, t>0,
\end{equation}
where the generator $\mathcal{L}$ is defined by
\begin{equation} \label{eq:generator}
\mathcal{L}\phi:= f\cdot \nabla\phi+\frac{1}{2}gg^{T}:\nabla^2 \phi,
\end{equation}
where $\nabla \phi$ and $\nabla^2 \phi$ are the gradient vector and the Hessian matrix of $\phi$,
and $A:B=\mathrm{trace}(A^TB)$ is a scalar product on matrices.

\begin{ass}
\label{th:ergodic1}
We assume the following.
\begin{enumerate}
\item $f,g$ are of class $C^{\infty}$, with bounded derivatives of any order, and $g$ is bounded;
\item the generator $\mathcal{L}$ in \eqref{eq:generator} is a uniformly elliptic operator, \emph{i.e.} there exists $\alpha>0$
such that for all
$
x, \xi \in \IR^{d} , \ x^T g(\xi) g(\xi)^T x 
\geq \alpha x^Tx
$;
\item there exist $C,\beta>0$ 
such that for all $x\in \R^d$,
$x^T f(x) \leq -\beta x^Tx+C$. 
\end{enumerate}
\end{ass}
Assumption \ref{th:ergodic1} implies automatically
that \eqref{e:main_sde} is ergodic with a unique invariant measure (see e.g. \cite{Has80}) and
satisfies the exponential ergodicity property \eqref{eq:expergo}. 
Notice that hypo-elliptic generators $\mathcal{L}$ could also be considered, to include the case of Langevin dynamics. 
We obtain for all $\phi \in \CC_P^\infty(\IR^d,\IR)$ the identity
\begin{equation} \label{eq:phiL}
\lim_{t\rightarrow \infty} u(x,t) = \phi(x) + \int_0^\infty \mathcal{L}u(t,x)dt =  \int_{\IR^d}\phi(y)\rho(y)dy .
\end{equation}
Considering $L^{2}$-adjoint of the generator $\mathcal{L}$, given by
\begin{equation} \label{eq:adjoint}
\mathcal{L}^{*}\phi=-\ddiv(f \phi)+\frac{1}{2}gg^{T}:\nabla^2\phi,
\end{equation}
we recall that the density $\rho$ of the invariant measure $\mu$ of \eqref{e:main_sde} is the unique solution of
\begin{equation} \label{eq:Lstar_rho}
\mathcal{L}^{*} \rho = 0.
\end{equation}
We make the following assumptions on the one step integrator \eqref{eq:methnum}. For $\phi\in \CC_P^\infty(\IR^d,\IR)$, consider the quantity
\begin{equation} \label{eq:numin_def}
U(x,h)=\mathbb{E}(\phi(X_{1})|X_0=x)).
\end{equation}
We assume that the scheme has local weak order $p$ i.e. it satisfies for all initial condition $X_0=x$,
\begin{equation}  
\label{weak:conv_loc}
|\IE(\phi(X_1)) -  \IE(\phi(X(h)))| \leq C(x) h^{p+1},
\end{equation}
for all $h$ sufficiently small, where $C(x)$ is independent of $h$ but depends on $x,\phi$. 

\begin{ass} \label{ass1} 
We assume that \eqref{eq:numin_def} has a weak Taylor series
expansion of the form,
\begin{equation} \label{eq:numin_taylor_expansion_formal}
U(x,h)=\phi(x)+hA_0\phi(x)+h^2 A_1\phi(x)+\ldots,
\end{equation}
for all $\phi \in \CC_P^\infty(\IR^d,\IR)$,
where $A_i,~i=0,1,2,\ldots$ are linear differential operators with coefficients depending smoothly on the drift and diffusion functions $f,g$, and their derivatives (and depending on the choice of the integrator). In addition, we assume that $A_0$
coincides with the generator $\mathcal{L}$ given in \eqref{eq:generator},
which means that the method has (at least) local order one in the weak sense,
\begin{equation} \label{eq:A0}
A_0(f,g) = \mathcal{L}.
\end{equation}
\end{ass}
For instance, for the $\theta$-method \eqref{eq:thetastd} applied to \eqref{eq:Brownian}, a calculation shows
\begin{eqnarray}\label{eq:A1std}
A_1\phi &=& \frac12 \phi''(f,f) + \frac{\sigma^2}2 \sum_{i=1}^d \phi'''(e_i,e_i, f)
+ \frac{\sigma^4}8  \sum_{i,j=1}^d \phi^{(4)} (e_i,e_i,e_j,e_j) +\theta \phi' f'f\nonumber\\
&+&  \theta \frac{\sigma^2}2\phi' \sum_{i=1}^d f''(e_i,e_i)
+ \theta  \sigma^2\sum_{i=1}^d \phi''(f'e_i,e_i),
\end{eqnarray}
where $e_1,\ldots,e_d$ is the canonical basis of $\R^d$, and $\phi'(x),\phi''(x),\phi'''(x),\ldots$ are the derivatives of $\phi$, which are linear, bilinear, trilinear, \ldots, forms, respectively (and analogously for $f$). We refer to \cite{Zyg11,ACV12} for examples of similar calculations.
We shall also use the following assumption \cite{Mil86} (see \cite[Chap.\ts 2.2]{MiT04}) which permits to guaranty that the numerical moments $\IE(|X_n|^k)$ up to any order $k$ are bounded along time. This also permits to infer automatically ``a global weak order $p$'' from the local weak error
\eqref{weak:conv_loc} of a given integrator, see \cite[Chap.\ts 2.2]{MiT04}.

\begin{ass} \label{ass:moments}
We assume that the numerical integrator \eqref{e:main_sde} satisfies for all $x\in\mathbb{R}^d$ 
that
\begin{equation} \label{ass:Milstein}
|\IE(X_{1}-X_0 | X_0=x)| \leq C (1+|x|) h, \qquad |X_{1}-X_0| \leq M (1+|X_0|) \sqrt h,
\end{equation}
where $C$ is independent of $h$ small enough and $M$ is a random variable that has bounded moments of all orders independent of $h$ and $X_0$. 
\end{ass}

Theorem \ref{thm:talay} below is standard and combines results derived by Talay and Milstein.
Precisely, it has been proved in \cite{TaT90} for specific methods
(the Euler-Maruyama and the Milstein methods), while the general procedure to infer  the global weak order
from the local weak order is due to Milstein \cite{Mil86} and can be found in \cite[Chap.\ts 2.2,\ts 2.3]{MiT04}.
The proof of Theorem \ref{thm:talay} is thus omitted.

\begin{theorem} \cite{TaT90,Mil86}
\label{thm:talay}
%
Let $X_{N}$ be an ergodic numerical solution of \eqref{e:main_sde} on $[0,T]$ and assume 
Assumption \ref{th:ergodic1}, Assumption \ref{ass1}, Assumption \ref{ass:moments}. Assume further the local weak order $p$ estimate \eqref{weak:conv_loc} where $C(x)$ has a polynomial growth of the form \eqref{eq:phi_assumption}.
%
Then, the 
invariant measure error
in \eqref{eq:difference1i} satisfies
for all $\phi\in\CC_P^\infty(\IR^d,\IR)$ and $h\rightarrow 0$,
\begin{equation} \label{eq:difference}
e(\phi,h)
=\lambda_{p}h^{p}+\mathcal{O}(h^{p+1})
\end{equation}
for any deterministic initial condition,  with $\lambda_{p}$ defined as 
\begin{equation} \label{eq:lambda}
\lambda_{p}=\int_{0}^{+\infty}\int_{\IR^{d}}\left({A}_{p}-\frac{1}{(p+1)!}\mathcal{L}^{p+1}\right)u(y,t) \rho(y)dydt
\end{equation}
where $u(x,t)$ is the solution of \eqref{eq:Kolmogorov}. 
In addition, there exists constants $\lambda,C,K(x)>0$ such that for all $n\geq 0$,
\begin{equation}\label{eq:numexp}
\left|\IE(\phi(X_n)) - \int_{\IR^d} \phi(y) d\mu(y) - \lambda_p h^p \right|
\leq K(x)e^{-\lambda t_n} + Ch^{p+1}
\end{equation}
where $t_n=nh$, the constants $C,K(x),\lambda$ are independent of $n,h$, and $h$ is assumed small enough ($C,K(x)$ depend on $\phi$ and
$K(x)$ depends on $x$).
\end{theorem}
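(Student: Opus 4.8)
The plan is to follow the backward-error-analysis argument of Talay and Tubaro, whose two ingredients are a telescoping identity between the discrete and continuous evolution operators and the exponential ergodicity of both processes. Since the scheme is a one-step Markov chain $X_{n+1}=\Psi(X_n,h,\xi_n)$ with i.i.d.\ increments, I would first introduce its transfer operator $U$, so that $\IE(\phi(X_n)\mid X_0=x)=(U^n\phi)(x)$, and compare it with the exact semigroup $e^{h\mathcal{L}}$, whose action on $\phi$ is the solution $u(t,x)$ of the Kolmogorov equation \eqref{eq:Kolmogorov}. From the local weak order $p$ estimate \eqref{weak:conv_loc}, the weak Taylor expansion of Assumption \ref{ass1}, and $A_0=\mathcal{L}$, one obtains on $\CC_P^\infty(\IR^d,\IR)$ the one-step discrepancy
$$
(U\psi)(x)-(e^{h\mathcal{L}}\psi)(x)=h^{p+1}\Big(A_p-\frac{1}{(p+1)!}\mathcal{L}^{p+1}\Big)\psi(x)+\mathcal{O}(h^{p+2}),
$$
with a remainder of polynomial growth in $x$, uniform in $h$ small.

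The functional-analytic core is to turn these formal expansions into rigorous estimates with $h$-independent, polynomially growing constants. Concretely I would establish: (i) the numerical moments $\IE(|X_n|^k)$ are bounded uniformly in $n$ and $h$, which follows from Assumption \ref{ass:moments} by Milstein's argument \cite[Chap.\ts 2.2]{MiT04}; (ii) $u(t,x)$ and all its spatial derivatives have polynomial growth in $x$ uniform in $t\ge0$ and converge exponentially, together with their derivatives, to the constant $\int\phi\,d\mu$ as $t\to\infty$ — a consequence of Assumption \ref{th:ergodic1}, the exponential ergodicity \eqref{eq:expergo}, and parabolic regularity; and (iii) the numerical chain satisfies a geometric ergodicity estimate $|(U^n\phi)(x)-\int\phi\,d\mu^h|\le K(x)e^{-\lambda t_n}$ with $\lambda$ independent of $h$ small, and $\mu^h$ has finite moments of all orders, which one imports from \cite{MSH02,Mil86}.

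With these bounds in hand the computation is essentially bookkeeping. Writing
$$
(U^n\phi)(x)-u(nh,x)=\sum_{k=0}^{n-1}\big(U^{n-1-k}(U-e^{h\mathcal{L}})\,e^{kh\mathcal{L}}\phi\big)(x),
$$
substituting the one-step expansion, and using that $\big(A_p-\tfrac{1}{(p+1)!}\mathcal{L}^{p+1}\big)$ annihilates constants (so the terms with $kh$ large are negligible by (ii)) together with $U^{n-1-k}\psi(x)\to\int\psi\,d\mu^h$ exponentially by (iii), the dominant contribution as $n\to\infty$ is the Riemann sum
$$
h^{p+1}\sum_{k=0}^{\infty}\int_{\IR^d}\Big(A_p-\frac{1}{(p+1)!}\mathcal{L}^{p+1}\Big)u(kh,y)\,\rho(y)\,dy,
$$
which converges to $h^p\lambda_p$ with $\lambda_p$ as in \eqref{eq:lambda}, up to $\mathcal{O}(h^{p+1})$. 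Two errors need care: replacing $\int\cdot\,d\mu^h$ by $\int\cdot\,d\mu$ in the leading integral costs $\mathcal{O}(h)$, which I would handle by first proving the crude bound $e(\psi,h)=\mathcal{O}(h)$ for all $\psi\in\CC_P^\infty(\IR^d,\IR)$ (using only local weak order one) and applying it, uniformly, to the auxiliary family $\psi=\big(A_p-\cdots\big)u(kh,\cdot)$; and the quadrature error of the Riemann sum, which is $\mathcal{O}(h)$ relative, hence $\mathcal{O}(h^{p+1})$. Since $u(nh,x)\to\int\phi\,d\mu$ exponentially by \eqref{eq:expergo}, keeping $n$ finite gives the estimate \eqref{eq:numexp} with the tail $K(x)e^{-\lambda t_n}$; letting $n\to\infty$ gives \eqref{eq:difference}, and the time-average statement \eqref{eq:difference1i} follows because the Birkhoff ergodic theorem identifies the almost-sure time average with $\int\phi\,d\mu^h=\lim_n\IE(\phi(X_n))$.

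I expect the main obstacle to be precisely the uniform-in-$h$ control of step (ii)–(iii): the parabolic regularity and exponential-decay bounds for $u(t,\cdot)$ and its derivatives, together with the uniform moment bounds on $X_n$, are what make the formal manipulations legitimate, while the telescoping identity and the Riemann-sum limit are routine once they are available. The secondary delicate point is the bootstrap that replaces $\mu^h$ by $\mu$ in the leading-order integral. Both are carried out in \cite{TaT90,Mil86} and \cite[Chap.\ts 2.2,\ts 2.3]{MiT04}, which is why the proof is omitted here.
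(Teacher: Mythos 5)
Your proposal is correct and reconstructs exactly the argument the paper defers to: the paper omits the proof of Theorem \ref{thm:talay} and cites \cite{TaT90,Mil86} and \cite[Chap.\ts 2.2,\ts 2.3]{MiT04}, whose content is precisely your telescoping comparison of $U^n$ with $e^{nh\mathcal{L}}$, the Riemann-sum identification of $\lambda_p$, and the uniform-in-$h$ moment, regularity and geometric-ergodicity estimates that justify it. You also correctly flag the two genuinely delicate points (the exponential decay of $u(t,\cdot)$ and its derivatives, and the bootstrap replacing $\mu^h$ by $\mu$ in the leading term), so nothing essential is missing from the sketch.
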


In fact, the assumption that the integrator has weak order $p$ in Theorem \ref{thm:talay} can be relaxed, as explain in the following remark. 

\begin{remark} \label{rem:star}
Using $\mathcal{L}^*\rho=0$ and considering the $L^2$-adjoint\footnote{%
For instance, for the Euler-Maruyama method, setting $\theta=0$ in \eqref{eq:A1std} and using the notation~$\Delta$ for the  Laplace operator, we obtain 
$A_1 \phi = \frac12 \phi''(f,f) + \frac{\sigma^2}2 (\Delta \phi)' f + \frac{\sigma^4}{8} \Delta^2 \phi$  and
$$A_1^* \phi = \frac12 \ddiv\left(\phi (f'f + \ddiv (f) f) + \|f\|^2 \nabla \phi - \sigma^2  f \Delta \phi\right)    + \frac{\sigma^4}{8} \Delta^2 \phi.$$
}
${A}_{p}^*$ of the linear differential operator ${A}_{p}$, we deduce that $\lambda_{p}$ in \eqref{eq:lambda} satisfies
$$
\lambda_{p}=\int_{0}^{+\infty}\int_{\IR^{d}} u(y,t) {A}_{p}^*\rho(y)dydt.
$$
We see that if ${A}_{p}^*\rho=0$, then the scheme has order $p+1$ for the invariant measure.
More generally, it is shown in \cite{AVZ14a} based on backward error analysis results in \cite{DeF12} on the torus
and generalizations \cite{Kop13a,Kop13b} in $\IR^d$ 
that the (local) weak order $p$ assumption \eqref{weak:conv_loc} in Theorem \ref{thm:talay} (equivalent to $A_{j+1}={\mathcal L^j}/{j!}$ for all $j<p$ in Assumption \ref{ass1})
can be replaced by the more general condition
\begin{equation} \label{eq:general_condition}
A^{*}_{j}\rho=0, \quad \text{for}  \quad j=1,\cdots p-1.
\end{equation}
This permits to derive in \cite{AVZ14a} new high order integrators based on modified differential equations
while it serves in \cite{AVZ14b} as a tool for the analysis of the order of convergence of Lie-Trotter splitting methods
applied to Langevin dynamics.
\end{remark}

\section{Analysis of postprocessed integrators}
\label{sec:main}

This section is devoted to the analysis of postprocessed integrators.
To illustrate our approach, we first study the simplest linear case of the
Orstein-Uhlenbeck process in dimension one.
We then focus on the analysis for nonlinear problems, with special emphasis
on the case of Brownian dynamics.

\subsection{Accuracy and stability analysis in the linear case}

We consider for simplicity the Orstein-Uhlenbeck process in dimensions $d=m=1$,
\begin{equation} \label{eq:OU}
dX=-\gamma X dt + \sigma dW(t),
\end{equation}
where $\gamma,\sigma>0$ are fixed constants. It corresponds to Brownian dynamics \eqref{eq:Brownian} with $V(x) = \frac\gamma2 x^2$.
For all initial condition $X_0=x$, the exact solution $X(t)$ is a Gaussian random variable with $\lim_{t\rightarrow  \infty} \IE(X(t)^2)=\frac{\sigma^2}{2\gamma}$.
Indeed, this system is ergodic and its invariant measure is Gaussian,
with density 
$$\rho(x)= \frac1{\sigma\sqrt{2\pi}}
e^{-\frac{\gamma}{2\sigma^2} x^2}.$$
Since the invariant measure is Gaussian, the error is obtained simply by comparing the exact and numerical variances.
Applying a one step numerical method to \eqref{eq:OU}, in particular a Runge-Kutta method, yields
in general a recursion of the form
\begin{equation} \label{eq:rklin}
X_{n+1} = A(z) X_n + B(z) \sqrt h \sigma \xi_n,
\end{equation}
where $z=-\gamma h$ and $A(z)=1+z+\bigo(z^2)$ and $B(z)=1+\bigo(z)$ are analytic functions depending on the choice of the method and $\xi_n\sim \mathcal{N}(0,1)$
are independent Gaussian random variables.
Then, a calculation using the recursion
$\IE(X_{n+1}^2)=\IE(X_n^2)A(z)^2 + B(z)^2 \sigma^2 h$
yields
\begin{equation} \label{eq:defR}
\lim_{n \rightarrow \infty} \IE(X_n^2) = \frac{\sigma^2}{2\gamma} |R(z)|,\qquad \mbox{where } R(z)=\frac{2z B(z)^2}{A(z)^2-1},\quad z=-\gamma h,
\end{equation}
for all $h$ small enough such that $|A(z)|<1$ (stable stepsize), and $\lim_{n \rightarrow \infty} \IE(X_n^2) = \infty$ if $|A(z)|> 1$ and
$\IE(X_0^2) \ne 0$  (unstable stepsize).

\paragraph{Stability for linear problems}
We recall here useful standard stability results. The scalar
linear SDE \eqref{eq:OU} can be considered as a test equation which gives
insight on the behavior for nonlinear problem with additive noise. Notice that other test equations are used in the literature in the case of multiplicative noise, see \cite{SaM96,Hig00,SaM02,BBT04,Toc05,RaB08,BuC10}.

We see from the above discussion that $\IE(X_n^2)$ remains bounded as
$n\rightarrow \infty$ (mean-square stability) if $|A(z)|<1$.
For the $\theta$ method, it can be checked \cite{Hig00,Hig00b} that 
this is satisfied for all stepsize $h>0$ if and only if $\theta \geq 1/2$, 
while for the Euler-Maruyama method ($\theta=0$), we have the stepsize restriction
$h\gamma <2$ which can be a severe restriction for large values of $\gamma$.
This stability property for all complex $\gamma$ with positive real part, unconditionally on the stepsize $h$, is called $A$-stability in the framework of deterministic Runge-Kutta methods \cite{HaW96}.
This makes the $\theta$-method with $\theta\geq 1/2$
well suited for stiff stochastic problems, in contrast to the Euler-Maruyama method.
We have the additional feature $\lim_{\gamma\rightarrow +\infty} \IE(X_1^2) =0$ for all $X_0=x$ if and only if $\theta=1$, a property called $L$-stability in the setting of deterministic Runge-Kutta-methods ($\sigma=0$), and that is also important in the context of mean-square stable stiff SDEs \cite{AVZ13b}. The $L$-stability property is desirable for severely stiff problems with very large eigenvalues on the negative real axis, because it permits to damp the higher modes. We emphasis that the $L$-stability property $\lim_{\gamma\rightarrow +\infty} \IE(\overline X_1^2) =0$ is also satisfied for the postprocessed methods \eqref{eq:theta1},\eqref{eq:poststo} and \eqref{eq:theta1},\eqref{eq:postdet}.

\paragraph{Accuracy analysis}

We see from \eqref{eq:defR} that the method (without postprocessor) has order $p$ for the Gaussian invariant measure if and only if $R(z)=1+\bigo(z^p)$ for $z\rightarrow 0$.
Applying a corrector $\overline X_n = G_n(X_n)$  of the form
$$
G_n(x) = C(z)  x + D(z) \sqrt h \sigma \xi_n,\quad z=-\gamma h,
$$
where $C(z)=1+\bigo(z)$ does not affect the mean-square stability of $\overline X_n$, we obtain
$$
\lim_{n \rightarrow \infty} \IE(\overline X_n^2) = 
\frac{\sigma^2}{2\gamma} C(z)^2|R(z)|+ h\sigma^2 D(z)^2,\quad z=-\gamma h.
$$
Comparing with the exact solution for which $\lim_{t \rightarrow \infty} \IE(X(t)^2)=\frac{\sigma^2}{2\gamma}$,
we deduce that the method has order $p$ for the invariant measure of the Orstein-Uhlenbeck process if and only if
the above limit is $\frac{\sigma^2}{2\gamma} + \bigo(h^p)$, equivalently
\begin{equation} \label{eq:condlin}
R(z)C(z)^2 - 
2zD(z)^2
= 1+\bigo(z^p),\quad z\rightarrow 0.
\end{equation}
It turns out that the above identity holds exactly for the postprocessed scheme \eqref{eq:non-Markovian0} for which $A(z)=1+z$, $B(z)=1+z/2$, $C(z)=1$, $D(z)=1/2$.
This yields the following remark
which states that the explicit postprocessed scheme \eqref{eq:non-Markovian0} (equivalently \eqref{eq:non-Markovian} from \cite{LM13}) has no bias in sampling the invariant measure for all stepsize $h$, a feature in the linear case shared with the implicit $\theta=1/2$-method \cite{Sch99b}. 

\begin{remark}
The postprocessed scheme \eqref{eq:non-Markovian0} applied to Brownian dynamics \eqref{eq:Brownian} samples exactly the invariant measure in the case of a linear drift (i.e. if $V(x)$ is a quadratic function on $\R^d$). 
\end{remark}

In \cite{BuL09}, in the context of linear second order problems, new $s$-stage explicit Runge-Kutta methods are introduced with high order  in the velocity correlation matrix with accuracy $\bigo(h^s)$ for $s=3,4,5$, respectively.
In the same spirit, we make the following remark which states using \eqref{eq:condlin} that
there always exist postprocessors achieving arbitrarily high order for
sampling the invariant measure for all consistent integrators applied to the Orstein-Uhlenbeck process.
\begin{remark}
Given a consistent method of the form \eqref{eq:rklin} for \eqref{eq:OU}, for arbitrary integer $s$, 
there exists a deterministic postprocessor of the form $\overline X_n = C(-\gamma h)X_n$ where $C(z)=R(z)^{-1/2} + \bigo(z^s)$ is a polynomial of degree strictly less than $s$ (corresponding 
in particular to an $(s-1)$-stage explicit Runge-Kutta method) such that the corresponding postprocessed scheme has order $s$ for the invariant measure. 
For instance, for the Euler-Maruyama method (see \eqref{eq:thetastd} with $\theta=0$), we obtain $C(z)=\sqrt{1+z/2}=1+z/4-z^2/32 + \ldots$.
\end{remark}

\subsection{Postprocessed integrators for nonlinear ergodic SDEs}

We may now state the main result of this paper for the analysis of
postprocessed integrators.
We shall use the commutator notation $[A,B]=AB-BA$ on linear differential operators.

\begin{theorem} \label{thm:main}
Assume the hypotheses of Theorem \ref{thm:talay} and 
consider a numerical method $\{X_{n}\}_{n\geq 0}$ with weak order $p$. 
Let $G_n$ denote independent and identically distributed random maps in $\R^d$,  independent of $\{X_{j}\}_{j\leq n}$
and satisfying \eqref{ass:Milstein} with $X_1$ replaced by $G_1(X_0)$ and a weak Taylor expansion of the form
\begin{equation} \label{eq:phiG}
\IE(\phi(G_n(x))) = \phi(x) + h^p \overline A_p \phi(x) + \bigo(h^{p+1}),
\end{equation}
for all $\phi \in \CC_P^\infty(\IR^d,\IR)$, where the constant in $\bigo$ has a polynomial growth with respect to $x$ of the form \eqref{eq:phi_assumption}.
Assuming further
\begin{equation} \label{eq:asscommutator}
({A}_{p} + [\mathcal{L},\overline A_p])^*\rho = 0,
\end{equation}
then $\overline X_n = G_n(X_n)$ yields an approximation of order $p+1$ for the invariant measure, precisely 
\begin{eqnarray} \label{eq:th1}
\left| \lim_{N \rightarrow \infty} \frac{1}{N+1}\sum_{k=0}^{N} \phi(\overline X_{k})-\int_{\R^d}\phi(y)d\mu(y) \right| &\leq& C h^{p+1}
\\ \label{eq:th2}
\left|\IE(\phi(\overline X_n)) - \int_{\IR^d} \phi(y) d\mu(y) \right| &\leq& K(x)e^{-\lambda t_n} + C h^{p+1}, 
\end{eqnarray}
for all $\phi\in\CC^\infty_P(\IR^d,\IR)$ with $t_n=nh$, where $C,K(x)$ are independent of $n$ and $h$ assumed small enough but depend of $\phi$ (and $K(x)$ depends on the initial condition $X_0=x$). 
\end{theorem}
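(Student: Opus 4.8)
The plan is to reduce the statement to an application of Theorem~\ref{thm:talay} (in the refined form of Remark~\ref{rem:star}) applied to the composed one-step map $\widetilde\Psi := G_n \circ \Psi$, where $\Psi$ is the one-step map of the original method. The key observation is that $\widetilde X_{n+1} := G_{n+1}(X_{n+1})$ is itself a Markov chain generated by a one-step integrator of \eqref{e:main_sde}, namely one whose transition operator is the composition of the transition operator of $\Psi$ with that of $G$. So the whole task is (i) to verify that $\widetilde\Psi$ satisfies all the hypotheses of Theorem~\ref{thm:talay} (ergodicity, Assumptions~\ref{th:ergodic1}, \ref{ass1}, \ref{ass:moments}, polynomial-growth local error), and (ii) to compute the leading coefficient $\widetilde A_p$ in its weak Taylor expansion \eqref{eq:numin_taylor_expansion_formal} and check that the condition \eqref{eq:general_condition} of Remark~\ref{rem:star}, i.e. $\widetilde A_p^*\rho = 0$, is exactly the hypothesis \eqref{eq:asscommutator}.

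First I would set up the transition operators. Write $U(x,h) = \IE(\phi(X_1)\mid X_0 = x)$ for the original method, so $U = \phi + h\mathcal L\phi + h^2 A_1\phi + \cdots$ by Assumption~\ref{ass1} and \eqref{eq:A0}, and write $V(x,h) = \IE(\phi(G_1(x))) = \phi + h^p\overline A_p\phi + \bigo(h^{p+1})$ by \eqref{eq:phiG}. Since $G_{n+1}$ is independent of $X_{n+1}$ and of $\{X_j\}_{j\le n}$, the Markov property gives for the composed chain $\widetilde X_n = G_n(X_n)$ a transition operator that is the operator composition: $\IE(\phi(\widetilde X_{n+1})\mid \widetilde X_n) = (\,\mathrm{op}\,\text{of }V)\circ(\,\mathrm{op}\,\text{of }U)$ acting on $\phi$. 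Here one must be slightly careful: $\widetilde X_n$ is a function of $X_n$, so I would instead argue at the level of $X_n$ — the family $\{X_n\}$ is the underlying ergodic chain with invariant law $\mu^h$, and the postprocessed averages are $\frac1{N+1}\sum\phi(\widetilde X_k) = \frac1{N+1}\sum (V(\cdot,h)$-action$)(\phi)(X_k)$ — and then apply the ergodic theorem for $\{X_n\}$ together with Theorem~\ref{thm:talay}/Remark~\ref{rem:star} applied to the effective observable. Composing the two expansions gives $\widetilde U(x,h) := V(U(x,h),h)$ of the form $\phi + h\mathcal L\phi + h^2\widetilde A_1\phi + \cdots$, with the crucial point that the $h^p$-coefficient is $\widetilde A_p = A_p + [\mathcal L, \overline A_p] + (\text{lower-order-commutator terms that vanish by the weak order } p \text{ hypothesis})$; the commutator $[\mathcal L,\overline A_p]$ arises precisely from the cross term between the $h\mathcal L$ part of $U$ and the $h^p\overline A_p$ part of $V$, versus the $h^p\overline A_p$ of $V$ acting on the $h\mathcal L$ part — i.e. the failure of $V$ and $U$ to commute at order $h^{p+1}$ in $\widetilde U$ contributes the bracket to the coefficient governing the invariant-measure bias. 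Then \eqref{eq:asscommutator} is exactly $\widetilde A_p^*\rho = 0$, and Remark~\ref{rem:star} upgrades the order to $p+1$, yielding \eqref{eq:th1} and \eqref{eq:th2}.

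For the hypothesis-checking: ergodicity of $\{X_n\}$ is assumed; Assumptions~\ref{th:ergodic1} are on the SDE itself and are unchanged; the composed map $\widetilde\Psi$ inherits the moment bound \eqref{ass:Milstein} from the assumptions on $\Psi$ and the assumption that $G_1$ satisfies \eqref{ass:Milstein} (one composes the two one-step increments and uses that $M$ has bounded moments of all orders), which also gives boundedness of numerical moments and hence the global-from-local weak order machinery of \cite[Chap.~2.2]{MiT04}; the local weak order statement \eqref{weak:conv_loc} with polynomial-growth constant for $\widetilde\Psi$ follows from composing \eqref{weak:conv_loc} for $\Psi$ (order $p$, so the chain is genuinely order $p$ in the weak sense by \eqref{eq:A0} and the weak-order-$p$ hypothesis, meaning $A_{j+1} = \mathcal L^j/j!$ for $j<p$) with \eqref{eq:phiG} (which perturbs only at order $h^p$, hence $A_{j+1}$ is unchanged for $j < p$, so $\widetilde\Psi$ still has local weak order $p$), and Assumption~\ref{ass1} for $\widetilde\Psi$ is then immediate. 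So $\widetilde\Psi$ meets every hypothesis of Theorem~\ref{thm:talay}.

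The main obstacle is the bookkeeping in the composition of the two weak Taylor expansions: one must be careful that $V(U(x,h),h)$ means first applying the operator-expansion of $U$ and then that of $V$ (order of operator composition matters, and the $h$ inside $V$ is the same $h$), and that all the cross terms of total order $\le h^{p+1}$ that are \emph{not} $A_p$, not $[\mathcal L,\overline A_p]$, and not of the innocuous form already matching $\mathcal L^{j}/j!$, actually cancel — this uses the weak-order-$p$ assumption on $\{X_n\}$ precisely to kill the cross terms between $h^j\mathcal L^j/j!$ ($1 \le j \le p-1$) of $U$ and the $h^p\overline A_p$ of $V$ at orders above $p+1$, and to identify the order-$(p+1)$ contributions. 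A secondary subtlety, worth a sentence in the proof, is the measurability/independence structure: since the averages $\frac1{N+1}\sum_k\phi(\overline X_k)$ involve the fresh randomness in each $G_k$, one should note that the law of $\overline X_k$ only depends on the law of $X_k$ through the (deterministic) operator $V(\cdot,h)$, so the ergodic limit in \eqref{eq:th1} is $\int V(y,h)(\phi)\,d\mu^h(y)$ and reduces to the already-treated case, and similarly \eqref{eq:th2} reduces to \eqref{eq:numexp}/\eqref{eq:difference1iexp} for the effective test function plus the $\bigo(h^{p+1})$ remainder from \eqref{eq:phiG}. Everything else is routine.
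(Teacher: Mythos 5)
There is a genuine gap, and it sits at the heart of the argument: the mechanism you propose for the appearance of the commutator $[\mathcal{L},\overline A_p]$ is not correct. Your primary framing treats the postprocessed method as the iteration of the composed one-step map $G\circ\Psi$. But that is a different scheme: the postprocessor is applied \emph{once} to $X_n$ to produce the output $\overline X_n$ and is never fed back into the recursion, whereas iterating $G\circ\Psi$ inserts a copy of $G$ between every two kernel steps. The transition operator of that iterated chain is the product of $U_h=I+h\mathcal{L}+\dots$ with $V_h=I+h^p\overline A_p+\dots$, whose $h^{p}$-coefficient acquires the extra term $\overline A_p$; by the error formula \eqref{eq:lambda}, a defect in the $h^{p}$-coefficient produces an invariant-measure error of order $h^{p-1}$, so iterating the postprocessor generically \emph{degrades} the order unless $\overline A_p^*\rho=0$ --- a condition that is neither assumed nor equivalent to \eqref{eq:asscommutator}. (For the scheme \eqref{eq:non-Markovian0}, feeding $\overline X_n$ back into the recursion injects extra noise of size $\frac12\sigma\sqrt h$ at every step and shifts the invariant measure by $\bigo(1)$.) Moreover, a single composition of the two expansions can only ever produce the one-sided product $A_p+\mathcal{L}\overline A_p$ (or $A_p+\overline A_p\mathcal{L}$, depending on the order of composition), never the difference $\mathcal{L}\overline A_p-\overline A_p\mathcal{L}$: there is no second cross term of opposite sign in a product of two operator series. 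The ``failure of $U$ and $V$ to commute'' would only enter through a conjugation $V_hU_hV_h^{-1}$, i.e.\ with a preprocessor, which is precisely what the stochastic setting does not have.

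Your secondary route --- apply Theorem \ref{thm:talay} to the chain $\{X_n\}$ with the effective observable $V_h\phi=\IE(\phi(G_n(\cdot)))=\phi+\bigo(h^p)$ --- is the paper's actual strategy, but you stop exactly where the real work begins. What must be shown is that the $\bigo(h^p)$ shift of the observable, whose average against the invariant density is $h^p\int_{\IR^d}\phi\,\overline A_p^*\rho\,dy$, cancels the bias $\lambda_ph^p$ of the unpostprocessed chain. The commutator enters here, through the identity
\begin{equation*}
\int_{\IR^d}\phi\,\overline A_p^*\rho\,dy=-\int_0^\infty\int_{\IR^d}\mathcal{L}u(t,y)\,\overline A_p^*\rho(y)\,dy\,dt=\int_0^\infty\int_{\IR^d}[\mathcal{L},\overline A_p]u(t,y)\,\rho(y)\,dy\,dt,
\end{equation*}
which follows from the integrated Kolmogorov equation \eqref{eq:phiL} together with $\mathcal{L}^*\rho=0$: the latter kills $\int\mathcal{L}\overline A_pu\,\rho\,dy$ and thereby supplies the ``missing'' term $-\overline A_p\mathcal{L}$ of the commutator for free. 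Only then does hypothesis \eqref{eq:asscommutator} convert this quantity into $-\int_0^\infty\int_{\IR^d} A_pu\,\rho\,dy\,dt=-\lambda_p$, and the conclusion follows from \eqref{eq:numexp} applied with $\phi$ replaced by $\phi\circ G_n=\phi+\bigo(h)$. This time-integrated identity is the one indispensable step of the proof, and it is absent from your proposal; the operator-composition bookkeeping you identify as the main obstacle cannot substitute for it.
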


\begin{proof}
Using \eqref{eq:phiG} yields
\begin{equation*}
\IE \left(\int_{\IR^d} \phi(G_n(y)) \rho(y) dy\right) = \int_{\IR^d} \phi(y) \rho(y) dy + h^p \int_{\IR^d} \phi(y)  \overline A_p^*\rho(y) dy
+\bigo(h^{p+1}),
\end{equation*}
and we obtain from \eqref{eq:phiL},
\begin{eqnarray*}
\int_{\IR^d} \phi(y)  \overline A_p^*\rho(y) dy &=& - \int_0^\infty \int_{\IR^d} \mathcal{L}u(t,y)  \overline A_p^*\rho(y) dydt
= \int_0^\infty \int_{\IR^d} [\mathcal{L},\overline A_p] u(t,y) \rho(y) dydt
\\
&=& - \int_{\IR^d} A_p \phi(y)  \rho(y) dy =  - \lambda_p
\end{eqnarray*}
where $\lambda_p$ is defined in \eqref{eq:lambda}, and we used assumption \eqref{eq:asscommutator} and $\mathcal{L}^* \rho =0$ (see \eqref{eq:Lstar_rho}). 
We deduce
\begin{equation} 
\IE \left(\int_{\IR^d} \phi(G_n(y)) \rho(y) dy\right) 
= \int_{\IR^d} \phi(y) \rho(y) dy - h^p \lambda_p
+\bigo(h^{p+1}), \label{eq:proof1}
\end{equation}
We now apply Theorem \ref{thm:talay} where $\phi$ is replaced by $\phi\circ G_n = \phi + \bigo(h)$. 
We deduce using \eqref{eq:numexp} and properties of conditional expectations
that 
$\overline X_n = G_n(X_n)$ satisfies
$$
\left|\IE(\phi(\overline X_n)) - \IE \left(\int_{\IR^d} \phi(G_n(y)) \rho(y) dy\right) -  h^p\lambda_p \right|
\leq K(x)e^{-\lambda t_n} + Ch^{p+1}.
$$
This combined with \eqref{eq:proof1} concludes the proof of \eqref{eq:th2}
and analogously \eqref{eq:th1}. 
\end{proof}

\begin{remark}
Similarly to Remark \ref{rem:star}, observe in Theorem \ref{thm:main} that the (local) weak order $p$
assumption \eqref{weak:conv_loc} on the scheme $X_n$ can be replaced by the more general condition~\eqref{eq:general_condition}. 
\end{remark}

\begin{remark}
Notice that in the assumptions of Theorem \ref{thm:main}, the postprocessor map $G_n$ needs not neither to involve the same random variables as $X_n\mapsto X_{n+1}$ nor to be independent from these random variables. 
In particular, the postprocessors $\overline X_n$ in \eqref{eq:non-Markovian0},\eqref{eq:poststo},\eqref{eq:postdet} could be defined using other independent Gaussian random variables with law $\cN(0,I)$ instead of $\xi_n$. Our numerical investigations indicate that there is no benefit to generate additional random variables. 
\end{remark}


\subsection{Application to Brownian dynamics}
To illustrate the proposed approach, we focus on the Brownian dynamics model \eqref{eq:Brownian} where $f=-\nabla V$ is the gradient of a smooth potential. We assume\footnote{%
Notice that the item 2. of Assumption \ref{th:ergodic1} is automatically satisfied since $g(\xi)^Tg(\xi):\nabla^2 = \sigma^2 \Delta$ for the Brownian dynamics model \eqref{eq:Brownian}, where $\Delta$ is the Laplacian operator.
} Assumption \ref{th:ergodic1}, which guaranties the ergodicity of the system and the exponential estimate \eqref{eq:expergo}.

We now consider the following modification 
with negligible overhead of the $\theta$-method,
\begin{equation} \label{eq:theta}
X_{n+1} = X_n + h  (1-\theta)f(X_n + a \sigma \sqrt{h} \xi_n) 
+ h \theta f(X_{n+1} + a \sigma \sqrt{h} \xi_n) + \sigma \sqrt{h} \xi_n,
\end{equation}
where $a$ is a fixed scalar parameter, together with the postprocessor
\begin{equation} \label{eq:postproc}
\overline X_n = X_n  + b h ((1-\theta) f(X_n) + \theta f(\overline X_n)) + c \sigma \sqrt{h} \xi_n,
\end{equation}
where $\xi_n\sim\cN(0,I)$ are independent Gaussian random vectors with dimension $d$, and $b,c$ are fixed parameters.
Notice that for all $a$, the modified $\theta$-method \eqref{eq:theta} with $\theta\ne1/2$ has the same order one of accuracy for the invariant measure as the standard version \eqref{eq:thetastd}.
For $a=b=c=0$, we recover the standard $\theta$-method \eqref{eq:thetastd}
and $\overline X_n = X_n$.
The following Theorem provides order conditions on $a,b,c$ for $\overline X_n$ to achieve second order of accuracy for the invariant measure. 

\begin{theorem} \label{thm:thetameth}
Assume Assumption \ref{th:ergodic1} and
consider the scheme \eqref{eq:theta} and the postprocessor \eqref{eq:postproc} applied to Brownian dynamics \eqref{eq:Brownian}.
If 
\begin{equation}\label{eq:ordercond}
a^2+2\theta a + b-c^2 = 0,\qquad 
a  + b-c^2 -\frac14+\frac\theta 2=0,
\end{equation}
then, assuming the ergodicity of $\{X_n\}$, we have
\begin{eqnarray*}
\left| \lim_{N \rightarrow \infty} \frac{1}{N+1}\sum_{k=0}^{N} \phi(\overline X_{k})-\int_{\R^d}\phi(y)d\mu(y) \right| &\leq& C h^{2}
\\
\left|\IE(\phi(\overline X_n)) - \int_{\IR^d}\phi(y)d\mu(y) \right|
&\leq& K(x)e^{-\lambda t_n} + Ch^{2},
\end{eqnarray*}
for all $\phi\in\CC^\infty(\IR^d,\IR)$ with $t_n=nh$, where $C,K(x)$ are independent of $n$ and $h$ assumed small enough. 
In particular, assuming ergodicity, the postprocessed schemes \eqref{eq:non-Markovian0},
\eqref{eq:theta1}-\eqref{eq:poststo}, and \eqref{eq:theta1}-\eqref{eq:postdet}
satisfy the above estimates.
\end{theorem}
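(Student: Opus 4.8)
The plan is to obtain Theorem~\ref{thm:thetameth} as a direct consequence of Theorem~\ref{thm:main} with $p=1$ (the modified $\theta$-method \eqref{eq:theta} has local weak order one, since the $a$-shift affects only the $\bigo(h^{3/2})$ and higher terms of the local error). First I would check that the hypotheses of Theorem~\ref{thm:main} hold for Brownian dynamics \eqref{eq:Brownian}: with $f=-\nabla V$ having bounded derivatives of all orders and $g=\sigma I$, the implicit relations \eqref{eq:theta} and \eqref{eq:postproc} are uniquely solvable for $h$ small by a Banach fixed point argument; the scheme \eqref{eq:theta} satisfies Assumption~\ref{ass:moments} and the local weak order one estimate \eqref{weak:conv_loc} with constants of polynomial growth (standard, see \cite[Chap.\ts 2]{MiT04}); and the map $G_n$ defined by \eqref{eq:postproc} satisfies the analogue of \eqref{ass:Milstein} with $X_1$ replaced by $G_1(X_0)$, using that $f$ is bounded and $\IE(\xi_n)=0$. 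Ergodicity of $\{X_n\}$ is assumed in the statement.

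Next I would compute the two weak Taylor coefficients entering condition \eqref{eq:asscommutator}. Expanding $\IE(\phi(G_n(x)))$ for \eqref{eq:postproc} to first order in $h$ (the half-integer powers of $h$ carry odd powers of $\xi_n$ and vanish in expectation) gives
$\overline A_1\phi = b\,f\cdot\nabla\phi + \tfrac{c^2\sigma^2}{2}\Delta\phi$, that is $\overline A_1 = c^2\mathcal{L} + (b-c^2)\,f\cdot\nabla$, and hence $[\mathcal{L},\overline A_1] = (b-c^2)\,[\mathcal{L},f\cdot\nabla]$. For the scheme \eqref{eq:theta} I would carry out the weak Taylor expansion of $U(x,h)=\IE(\phi(X_1))$ to order $h^2$, solving the fixed point for $X_1-x$ up to $\bigo(h^2)$ while tracking the shift $f(\cdot+a\sigma\sqrt h\xi_n)$; this confirms $A_0=\mathcal{L}$ and yields $A_1$ equal to the coefficient \eqref{eq:A1std} of the standard $\theta$-method plus explicit $a$-dependent corrections involving $f'f$, $\sum_i f''(e_i,e_i)$, and second and third derivatives of $\phi$.

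The crux is then to verify \eqref{eq:asscommutator}, i.e. $(A_1+[\mathcal{L},\overline A_1])^*\rho=0$. I would take the $L^2$-adjoint of $A_1+[\mathcal{L},\overline A_1]$, apply it to the Gibbs density $\rho=Ze^{-2V/\sigma^2}$, and repeatedly use the relation $\tfrac{\sigma^2}{2}\nabla\rho=f\rho$ together with the reversibility identity $\mathcal{L}^*(\psi\rho)=\rho\,\mathcal{L}\psi$ special to Brownian dynamics (so that, after integration by parts against $\rho$, only full-divergence contributions survive, cf. the footnote to Remark~\ref{rem:star}). After this collapse the whole expression reduces to a linear combination of two linearly independent elementary patterns built from $\rho$ and derivatives of $V$, whose coefficients are exactly $a^2+2\theta a+b-c^2$ and $a+b-c^2-\tfrac14+\tfrac\theta2$; requiring both to vanish is precisely \eqref{eq:ordercond}. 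Organizing this adjoint bookkeeping so that it manifestly reduces to two scalar conditions is the main obstacle; exploiting the reversible structure of \eqref{eq:Brownian} from the outset keeps it tractable. (When the weak order of \eqref{eq:theta} happens to be two, one invokes instead the generalized condition \eqref{eq:general_condition} as in the Remark following Theorem~\ref{thm:main}.)

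Finally, Theorem~\ref{thm:main} gives order $p+1=2$ for $\overline X_n$, and it remains to recognize the three listed schemes as instances and check \eqref{eq:ordercond} by substitution: \eqref{eq:non-Markovian0} corresponds to $\theta=0$, $a=\tfrac12$, $b=0$, $c=\tfrac12$; \eqref{eq:theta1}--\eqref{eq:postdet} to $\theta=1$, $a=-\tfrac12+\tfrac{\sqrt2}2$, $b=\tfrac{\sqrt2}2$, $c=\tfrac{\sqrt{4\sqrt2-1}}2$; and \eqref{eq:theta1}--\eqref{eq:poststo} to $\theta=1$, $a=-\tfrac12+\tfrac{\sqrt2}2$, $b=0$, $c=\tfrac{\sqrt{2\sqrt2-1}}2$, where the stabilization factor $J_n^{-1}=I+\bigo(h)$ modifies \eqref{eq:phiG} only at order $\bigo(h^2)$ and the dependence of $J_n$ on $X_n$ is absorbed by the relaxation of the independence hypothesis noted in the Remark following Theorem~\ref{thm:main}. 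In each case a short computation confirms that both relations in \eqref{eq:ordercond} are satisfied.
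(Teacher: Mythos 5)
Your proposal is correct and follows essentially the same route as the paper's proof: compute $\overline A_1\phi=b\,\phi'f+\tfrac{c^2\sigma^2}{2}\Delta\phi$ and the $a$-dependent $A_1$ of \eqref{eq:theta}, reduce $\left\langle A_1\phi+[\mathcal{L},\overline A_1]\phi\right\rangle$ by integration by parts against the Gibbs density using $\partial_j\rho=\tfrac{2}{\sigma^2}f^j\rho$ to the two scalar coefficients of \eqref{eq:ordercond}, and conclude via Theorem~\ref{thm:main}; your parameter identifications for \eqref{eq:non-Markovian0}, \eqref{eq:theta1}--\eqref{eq:poststo} and \eqref{eq:theta1}--\eqref{eq:postdet} all check out. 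The only minor slip is the claim that $f=-\nabla V$ is bounded when verifying \eqref{ass:Milstein} for $G_1$: under Assumption~\ref{th:ergodic1} it is only globally Lipschitz, hence of linear growth, which is exactly what \eqref{ass:Milstein} requires.
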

The proof of Theorem \ref{thm:thetameth} is postponed to the Appendix.
We explain here why it yields the order two of accuracy for the invariant measure of the postprocessed schemes \eqref{eq:non-Markovian0}, \eqref{eq:theta1}-\eqref{eq:poststo}, and \eqref{eq:theta1}-\eqref{eq:postdet}.

If $\theta=0$ or $\theta\geq 1/2$, and for a given $c$, the order conditions \eqref{eq:ordercond} can be shown to have real solutions given by
$$
a=\frac12-\theta \pm\frac{\sqrt{2\theta(2\theta-1)}}2, \quad
b= c^2-\frac14+\frac\theta2  \mp\frac{\sqrt{2\theta(2\theta-1)}}2.
$$
We now focus on special values of $\theta$ to construct 
various methods of order two for the invariant measure of \eqref{eq:Brownian}.
\begin{itemize}
\item 
For
$\theta=0$ (Euler-Maruyama method), we obtain 
$a=\frac14, b=c^2-\frac 14$. Imposing respectively $b=0$ and $c=0$ yields
the following postprocessed methods of order two:
the method \eqref{eq:non-Markovian0} from \cite{LM13},
$$
X_{n+1} = X_n + h  f(X_n + \frac12 \sigma \sqrt{h} \xi_n) 
 + \sigma \sqrt{h} \xi_n,
\qquad
\overline X_n=  X_n + \frac12 \sigma \sqrt{h} \xi_n.
$$
and the variant
\begin{equation*} 
X_{n+1} = X_n + h  f(X_n + \frac12 \sigma \sqrt{h} \xi_n) 
 + \sigma \sqrt{h} \xi_n,
\qquad
\overline X_n=  X_n + \frac {h}4  f(X_n) .
\end{equation*}
\item 
For $\theta=\frac12$, we obtain $a=0$ and $b=c^2$. This is not surprising
because the standard $\theta$-method \eqref{eq:thetastd} with $\theta=\frac12$ is already exact for sampling the invariant measure as noted in \cite{Sch99b},
and it has weak order two when applied to SDEs with additive noise such as \eqref{eq:Brownian}. Postprocessors for order two are therefore useless in this case.
\item 
For $\theta=1$, the assumption $b=0$ yields the method \eqref{eq:theta1},\eqref{eq:poststo} (without the matrix $J_n^{-1}$) of the form
$$
X_{n+1} = X_n + h  f(X_{n+1} + a \sigma \sqrt{h} \xi_n) 
 + \sigma \sqrt{h} \xi_n,\quad
\overline X_n=  X_n + c \sigma \sqrt{h} \xi_n.
$$
Since the matrix $J_n^{-1}$ in \eqref{eq:poststo} is a $\bigo(h)$ perturbation of the identity, its does not change the order $p=1$ weak expansion of the postprocessor
in assumption \eqref{eq:phiG} of Theorem \ref{thm:main}.
The method \eqref{eq:theta1},\eqref{eq:poststo} therefore has order two of accuracy for the invariant measure. 

Imposing alternatively the condition of order $3$ for linear problems (i.e. \eqref{eq:condlin} with $p=3$) yields the method \eqref{eq:theta1},\eqref{eq:postdet} of the form
$$
X_{n+1} = X_n + h  f(X_{n+1} + a \sigma \sqrt{h} \xi_n) 
 + \sigma \sqrt{h} \xi_n,\quad
\overline X_n=  X_n + h b f(\overline X_{n})  + c \sigma \sqrt{h} \xi_n,
$$
which has order two of accuracy for the invariant measure, and order three for linear problems (i.e. for a quadratic potential $V$). 
\end{itemize}

\begin{figure}[htb]
\centering
\bigskip
\scalebox{0.82}{\global\def\path{#1}\input{prog/figs/figconv1.inp}}
\bigskip
\caption{
Linear problem with potential \eqref{eq:pb1}.
Implicit Euler method (\eqref{eq:thetastd} with $\theta=1$, order 1),
$\theta=1/2$-method (order 2), and postprocessed versions (\eqref{eq:theta1},\eqref{eq:poststo} and \eqref{eq:theta1},\eqref{eq:postdet}) of order $2$.
The variant of the method \eqref{eq:theta1},\eqref{eq:poststo} without the stabilization term $J_n^{-1}$ is also included.
Error for the second moment $\int_\R (x_1^2+x_2^2) \rho(x) dx$ versus time stepsize $h$ obtained using $10$ 
trajectories on a long time interval of length $T=10^5$. 
The vertical bars indicate the standard deviation intervals due to the Monte-Carlo error. 
\label{fig:figconv1}}
\end{figure}

\section{Numerical experiments}
\label{sec:num}

In this section, we illustrate numerically the convergence properties
of the new implicit schemes introduced in Section \ref{sec:meth} for the high order sampling of the invariant measure of Brownian dynamics \eqref{eq:Brownian}. 
Although our analysis applies only to globally Lipschitz vector fields, we shall consider numerical examples with non-globally Lipschitz fields, since such a regularity often arises in applications. 
We highlight once again that the use of the proposed schemes can still be made rigorous in this context, using the concept of rejecting exploding trajectories proposed in \cite{MT05,MT07}.

We focus on stiff problems for which implicit methods are needed to avoid the severe step size restrictions of standard explicit integrators. 
We consider the standard implicit Euler method (\eqref{eq:thetastd} with $\theta=1$), the standard $\theta=1/2$ method \eqref{eq:thetastd}, and the
new postprocessed methods \eqref{eq:theta1},\eqref{eq:poststo}
and \eqref{eq:theta1},\eqref{eq:postdet}.
We have implemented all methods in Fortran, using a Newton method with tolerance $10^{-13}$ to solve the nonlinear implicitness of the methods. 
The results for the explicit postprocessed scheme \eqref{eq:non-Markovian0} from \cite{LM13}
are not included because a severe stepsize
restriction $h\ll 1$ makes explicit methods unusable for such stiff problems (analogously to the deterministic case).

\begin{figure}[bt]
\centering
\begin{multicols}{2}
Nonstiff case \eqref{eq:pb2nonstiff}\\
\includegraphics[scale=0.5]{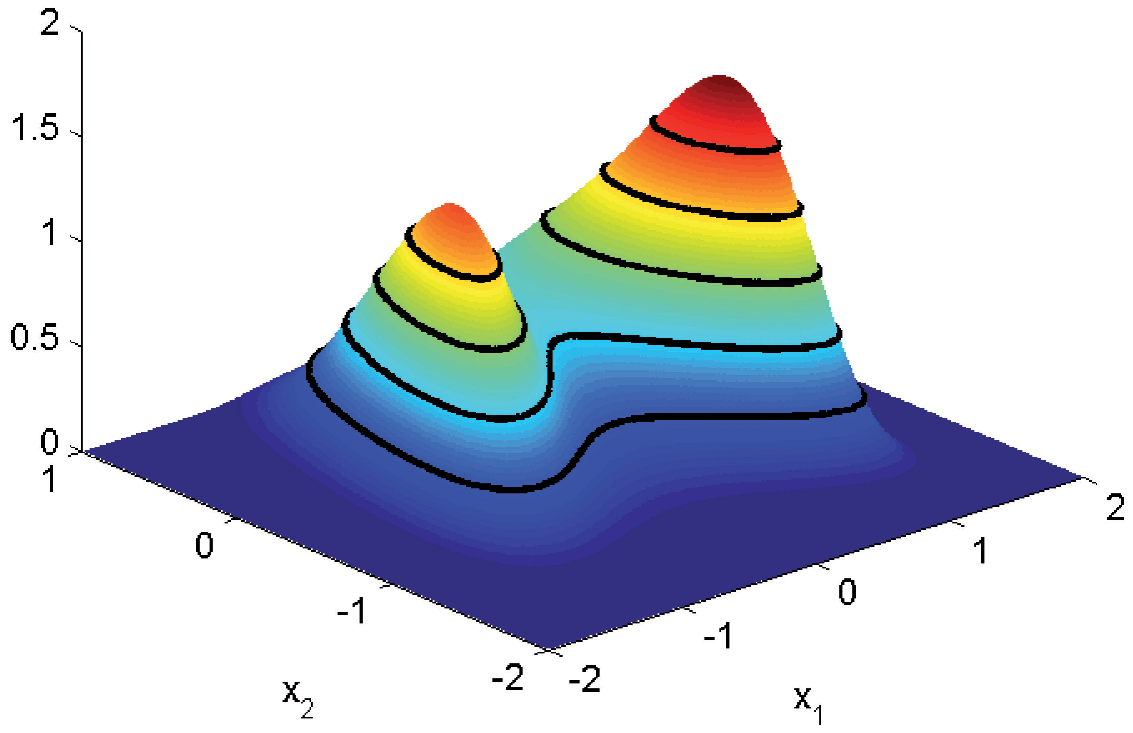}\\[-3mm]
\includegraphics[scale=0.5]{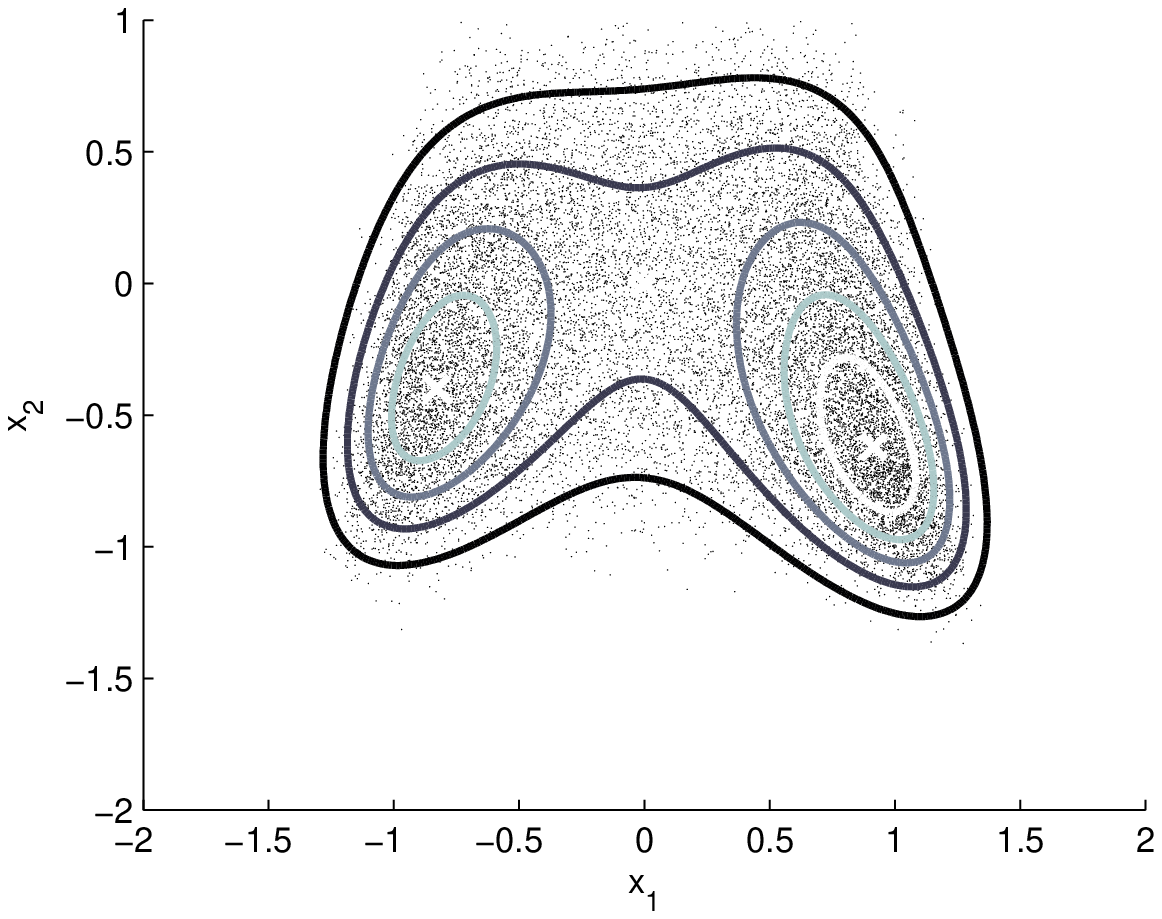}\\
\columnbreak
Stiff case \eqref{eq:pb2stiff}\\
\includegraphics[scale=0.5]{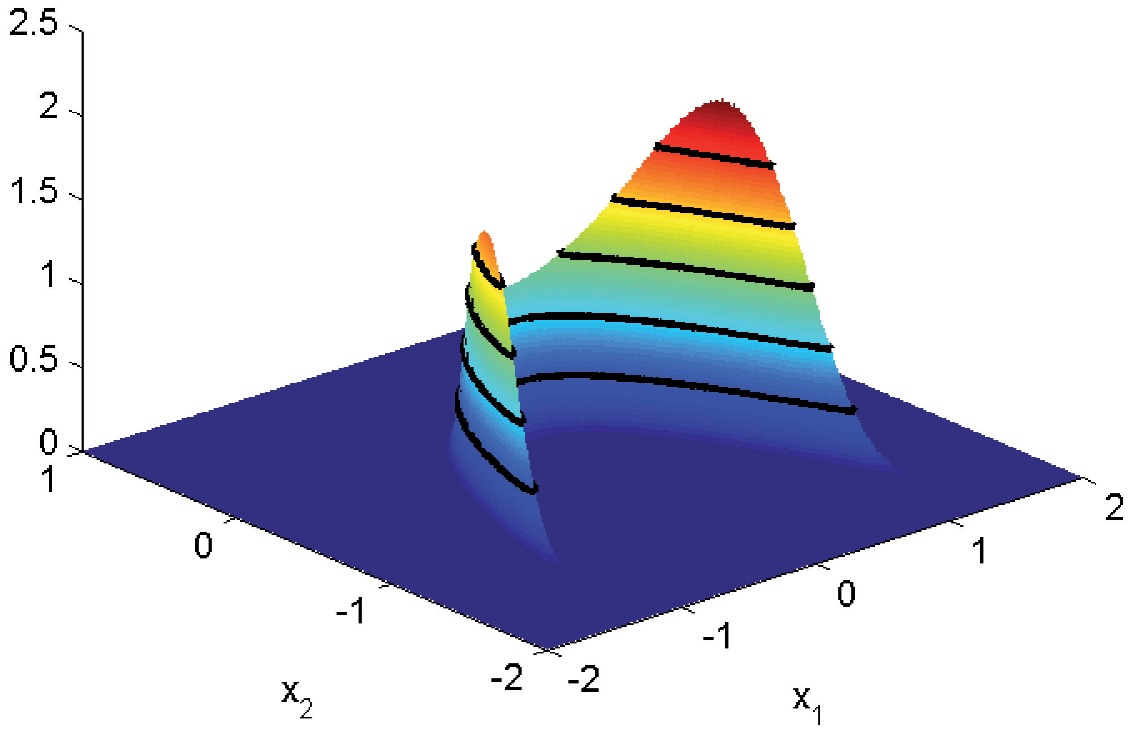}\\[-3mm]
\includegraphics[scale=0.5]{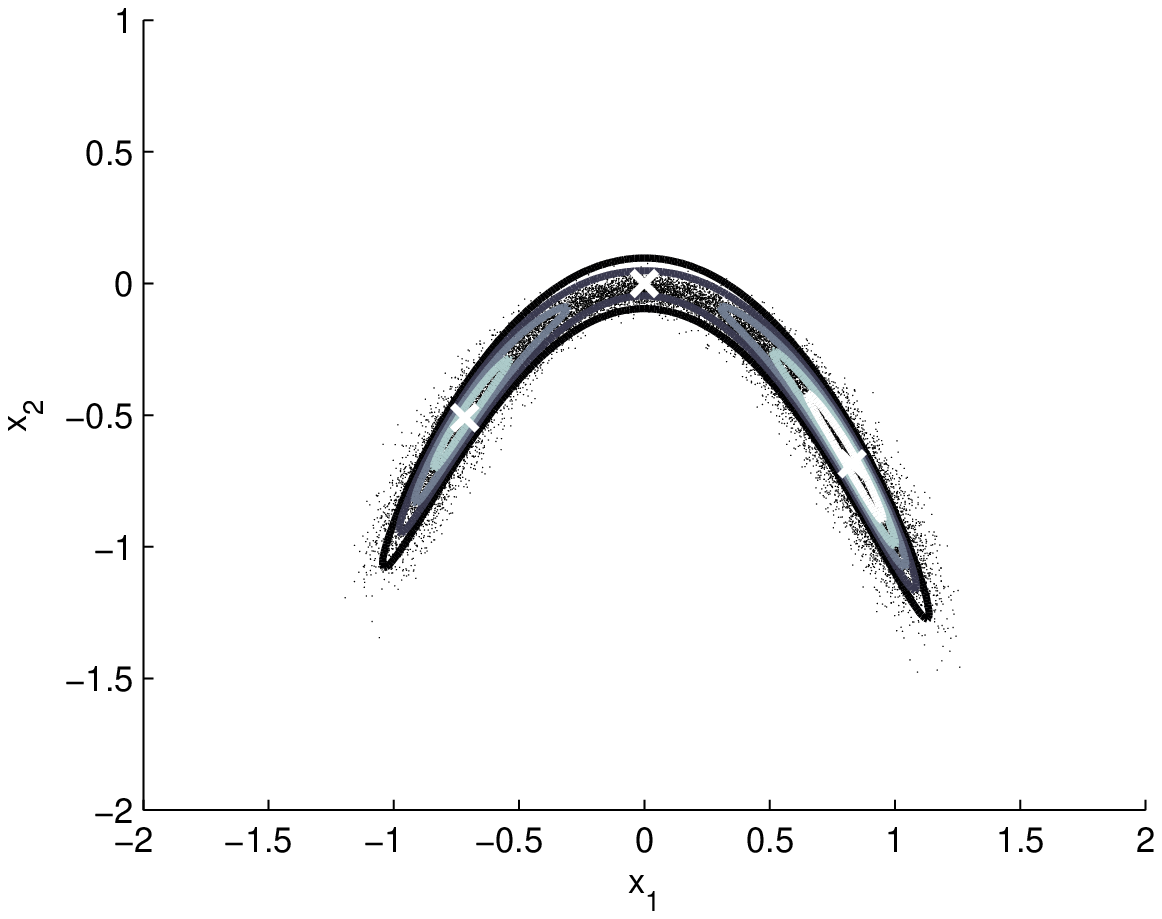} 
\end{multicols}
\caption{
Plots of the Gibbs density \eqref{eq:rhoexp0} for the nonlinear problems with potential \eqref{eq:pb2nonstiff} (nonstiff case, left pictures) and  \eqref{eq:pb2stiff} (stiff case, right pictures).
Four level curves are represented in solid lines, the three extrema are represented with crosses.  
In the bottom pictures, a numerical trajectory of the method \eqref{eq:theta1} is included, with length $T=10^3$ and stepsize $h=0.05$.
\label{fig:figrk}}
\end{figure}

\subsection{Linear test problem}
We consider first the linear problem in 2D, with a quadratic potential defined by
\begin{equation} \label{eq:pb1}
V(x)= \frac{x_1^2}{2} + \frac{x_2^2}{2\varepsilon} + \frac{x_1x_2}4.
\end{equation}
For the arbitrary initial condition $x_1=x_2=-3$, we plot in Figure
\ref{fig:figconv1} the error for the second moment $\int_\R (x_1^2+x_2^2) \rho(x) dx$ for many time stepsizes $h$, using the average over $10$ long trajectories of length $10^5$. 
We compare in Figure \ref{fig:figconv1} the nonstiff case ($\eps=1$) and the stiff case ($\eps=10^{-6}$).
\begin{itemize}
\item
In the nonstiff case (left picture of Figure \ref{fig:figconv1}), we observe the expected order 2 for the postprocessed
method \eqref{eq:theta1},\eqref{eq:poststo} (we also include the variant without 
matrix $J_n^{-1}$ in \eqref{eq:poststo}, which yields nearly identical results), while 
the postprocessed method \eqref{eq:theta1},\eqref{eq:postdet} has order 3 for linear problems and is significantly more accurate by a factor of about 10.
A line of slope 1 can be observed for the standard implicit Euler method.
The $\theta=1/2$ method is exact for linear Brownian dynamics and the observed bias with magnitude $10^3$ is due to the Monte-Carlo error (known \cite{MSH02} to decay as $\bigo(T^{-1/2})$).

\item
In the stiff case (right picture of Figure \ref{fig:figconv1}), the implicit Euler method and the postprocessed methods \eqref{eq:theta1},\eqref{eq:poststo} and \eqref{eq:theta1},\eqref{eq:postdet} do not loose their efficiency. 
In contrast, the $\theta=1/2$ method becomes poorly accurate in the very stiff regime. This phenomena is well known already for severely stiff deterministic problems (the $\theta=1/2$ method is not $L$-stable), and this is highlighted in the stochastic context in \cite{LAE08}. A loss of accuracy is also observed for the variant of the method \eqref{eq:theta1},\eqref{eq:poststo} where the term $J_n^{-1}$ providing $L$-stability is omitted in \eqref{eq:poststo}. 
\end{itemize}

\subsection{Nonlinear test problems}

We next consider two nonlinear problems.
On the one hand, we consider a nonstiff problem in dimension $d=2$ with potential
\begin{equation} \label{eq:pb2nonstiff}
V(x) = (1-x_1^2)^2+x_2^4- x +x_1\cos(x_2)  +(x_2+x_1^2)^2,
\end{equation}
and on the other hand, we consider the stiff problem  in dimension $d=3$ with potential
\begin{equation} \label{eq:pb2stiff}
V(x) = (1-x_1^2)^2+x_2^4- x +x_3\cos(x_2)  +100(x_2+x_1^2)^2 + \frac{10^6}2(x_1-x_3)^2.
\end{equation}
\begin{figure}[bt]
\centering
\bigskip
\scalebox{0.82}{\global\def\path{#1}\input{prog/figs/figconv2.inp}}
\bigskip
\caption{
Nonlinear nonstiff problem \eqref{eq:pb2nonstiff} and stiff problem\eqref{eq:pb2stiff}. 
Implicit Euler method (\eqref{eq:thetastd} with $\theta=1$, order 1) and postprocessed versions \eqref{eq:theta1},\eqref{eq:poststo} and \eqref{eq:theta1},\eqref{eq:postdet} of order $2$.
The $\theta=1/2$-method (order 2) is also included in the nonstiff case (left picture).
Error in $\int_\R^d (x_2+x_1^2)^2 \rho(x) dx$ versus time stepsize $h$ obtained using $10$ 
trajectories on a long time interval of length $T=10^5$. 
The vertical bars indicate the standard deviation intervals due to the Monte-Carlo error. 
\label{fig:figconv2}}
\end{figure}
In Figure \ref{fig:figrk}, we plot the associated Gibbs density measure \eqref{eq:rhoexp0}
together with five level curves for the components\footnote{%
Since \eqref{eq:pb2stiff} is in 3D, the right pictures of Figure \ref{fig:figrk} correspond to the Gibbs density measure for the marginal law 
of the components $(x_1,x_2)$.
} $x_1,x_2$ for the nonstiff problem \eqref{eq:pb2nonstiff}  (left pictures)
and the stiff problem \eqref{eq:pb2stiff} (right pictures). 
It can be seen that the distribution is much narrower in the stiff case
and concentrated close to the curve $\{x_2+x_1^2=0\}$, due to the stiff term $100(x_2+x_1^2)^2$ in \eqref{eq:pb2stiff}. 
In the bottom pictures of Figure \ref{fig:figrk}, we also include one numerical trajectory of length $T=10^3$ with stepsize $h=0.05$ for the $L$-stable method \eqref{eq:theta1}. It fits well the invariant distribution for both the nonstiff and stiff cases.

For both problems, we take the initial conditions $x_i=i$ ($i=1,2,3$) and consider ten long trajectories with length $T=10^4$.
Analogously to the linear case, we compare in Figure \ref{fig:figconv2} the results in the nonstiff and stiff cases. We plot for many constant stepsizes $h$ the errors for the evaluation of the quantity 
$\int_{\R^d} (x_2+x_1^2)^2 \rho(x) dx$ (a reference value for this integral is computed using an accurate quadrature method).

\begin{itemize}
\item In the nonstiff case (left picture of Figure \ref{fig:figconv2}), we observe the expected convergence lines of slope 1 and 2 for the implicit Euler method and the variant 
postprocessed method \eqref{eq:theta1},\eqref{eq:poststo}, while
the $\theta=1/2$ method and the postprocessed method \eqref{eq:theta1},\eqref{eq:postdet} (order 2 for nonlinear problems, order 3 for linear problems) reveal much more accurate.
\item In the stiff case (right picture of Figure \ref{fig:figconv2}),
the Newton iterations of the $\theta=1/2$-method fail to converge for the considered stepsizes, a convergence curve is thus not included.
The implicit Euler method still exhibits a convergence curve $1$.
The best accuracy is observed for the postprocessed method \eqref{eq:theta1},\eqref{eq:postdet} of order two. 
\end{itemize}

\begin{figure}[bt]
\begin{multicols}{2}
\includegraphics[scale=0.5]{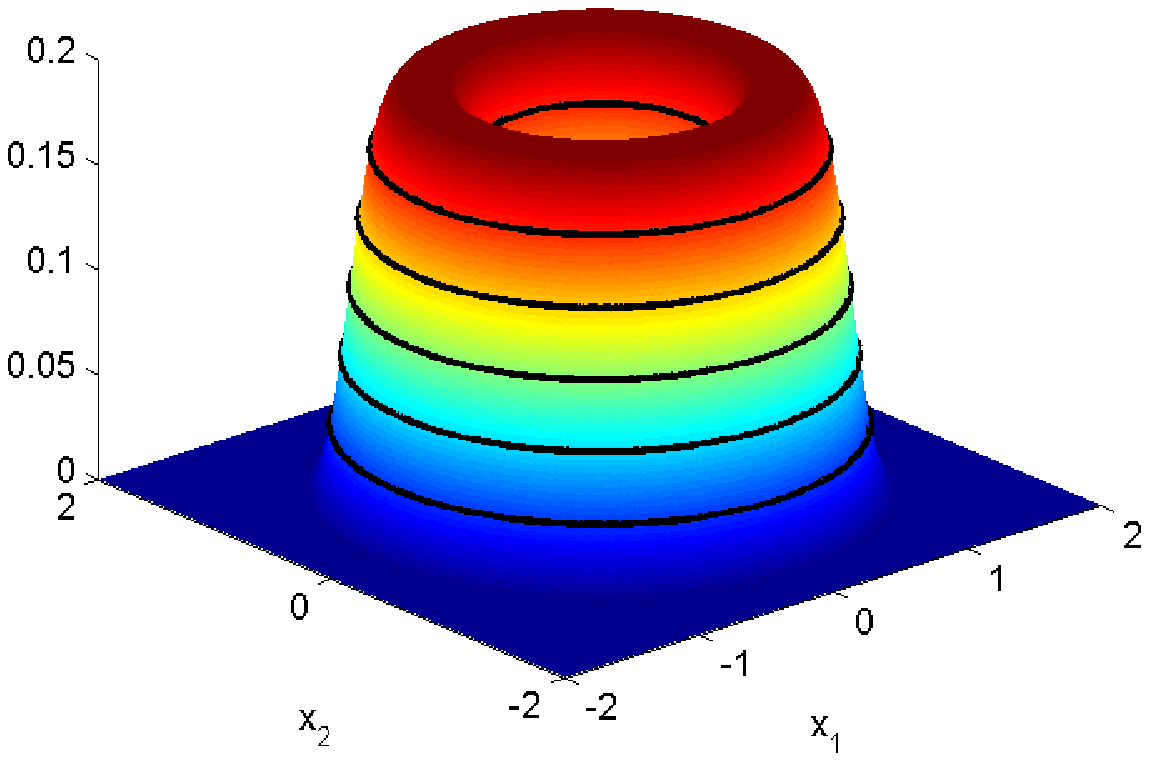}\\
\columnbreak
\hskip8mm \scalebox{0.82}{\global\def\path{#1}\input{prog/figs/figconv3.inp}}
\end{multicols}

\caption{
Nonlinear problem in dimension $d=10$ with potential \eqref{eq:pb3}.
Left picture: plot of the Gibbs density \eqref{eq:rhoexp0} in the case $d=2$.
Right picture: comparison of the implicit Euler method (\eqref{eq:thetastd} with $\theta=1$, order 1) and postprocessed versions (new method \eqref{eq:theta1},\eqref{eq:postdet} and variant \eqref{eq:theta1},\eqref{eq:poststo}) of order $2$.
The $\theta=1/2$-method (order 2) is also included.
Error for $\int_{\R^d} (|x|-1)^2 \rho(x) dx$ versus time stepsize $h$ obtained using $10$ 
trajectories on a long time interval of length $T=10^4$. 
\label{fig:figconv3}}
\end{figure}

Finally, as a more challenging problem, we consider the following potential in dimension $d=10$,
\begin{equation} \label{eq:pb3}
V(x) = 25(1-|x|)^4, \qquad |x|=\sqrt{\sum_{i=1}^d x_i^2}.
\end{equation}
This is a modification of a standard nonlinear model for the motion of a spring in $d$ dimensions considered as an illustrative expample in \cite[Sect.\ts7]{San14}.
We plot in Figure \ref{fig:figconv3} (left picture) the Gibbs density measure \eqref{eq:rhoexp0} in dimension $d=2$, and it can be seen that the distribution is concentrated close to the sphere $ \{|x|=1\}$
centred at the origin with radius $1$.
In Figure \ref{fig:figconv3} (right picture), we plot the convergence curves
for $\int_{\R^d} (|x|-1)^2 \rho(x) dx$. Again, excellent performances of the postprocessed method \eqref{eq:theta1},\eqref{eq:postdet} can be observed. For the considered stepsizes, it is about 20 times more accurate than the standard $\theta=1/2$ method, also of order two for the invariant measure. 
As a reference solution, we took the result of the $\theta=1/2$ method with stepsize $h=2^{-4.5}\simeq 4.4 \cdot 10^{-2}$.

\section{Summary}

In this paper, we extended to the stochastic context the methodology of effective order of Butcher. The approach of stochastic postprocessed integrators was illustrated in the case of Brownian dynamics, for which new schemes of order two were constructed based on the $\theta$-method, in particular implicit schemes with favourable stability properties for the sampling of the invariant measure of ergodic systems. 
We rediscovered the second order non-Markovian scheme from \cite{LM13} and analyzed in \cite{LMT14} which can be interpreted as a postprocessed integrator. 
This new insight permitted us to construct new high order integrators based on the $\theta$-method, and to provide a rigorous error analysis based on the standard theory of Markovian schemes.
Our approach is in principle not restricted to Brownian dynamics and could apply to larger classes of SDEs (Theorem \ref{thm:main}). This will be investigated in future works.\looseness-3


\bigskip

\noindent \textbf{Acknowledgements.}\
This work was
partially supported by the Fonds National Suisse, project No. 
200020\_144313/1.

\section{Appendix}

\textit{Proof of Theorem \ref{thm:thetameth}.}
For the scheme \eqref{eq:theta}, a calculation analogous to \eqref{eq:A1std} shows that \eqref{eq:numin_taylor_expansion_formal} holds with
\begin{eqnarray*}
A_1\phi &=& \frac12 \phi''(f,f) + \frac{\sigma^2}2 \sum_{i=1}^d \phi'''(e_i,e_i, f)
+ \frac{\sigma^4}8  \sum_{i,j=1}^d \phi^{(4)} (e_i,e_i,e_j,e_j) +\theta \phi' f'f\nonumber\\
&+&  \Big((1-\theta)a^2+\theta(a+1)^2\Big) \frac{\sigma^2}2\phi' \sum_{i=1}^d f''(e_i,e_i)
+ {\Big(a +\theta\Big) } \sigma^2\sum_{i=1}^d \phi''(f'e_i,e_i).
\end{eqnarray*}
Using integration by parts, 
the following identities where proved in \cite{AVZ14a},
\begin{eqnarray} 
\left\langle \phi''(f,f) \right\rangle &=&   \textstyle
\left\langle   -\phi'(f'f + \ddiv (f) f + \frac2{\sigma^2} \|f\|^2 f) \right\rangle, 
\nonumber\\
\textstyle\left\langle \sigma^2 \sum_i \phi'''(f,e_i,e_i) \right\rangle    &=&  \textstyle
\left\langle  \phi'(\sigma^2  \sum_i f''(e_i,e_i) + 4f'f + 2 \ddiv (f) f + \frac4{\sigma^2} \|f\|^2 f) \right\rangle,\nonumber\\
\textstyle \left\langle \sigma^2\sum_{ij}\phi^{(4)}(e_i,e_i,e_j,e_j) \right\rangle &=&  \textstyle
\left\langle  -\sum_i 2\phi'''(f,e_i,e_i) \right\rangle,\nonumber\\
\textstyle \left\langle   \sigma^2 \sum_i \phi''(f'e_i,e_i) \right\rangle &=&  \textstyle
\left\langle  -\phi'(\sigma^2 \sum_i f''(e_i,e_i) + 2f' f) \right\rangle,
\label{eq:estimparts}
\end{eqnarray}
where we use the notation 
$\left\langle u \right\rangle = \int_{\IR^d} u(x) \rho(x) dx$ 
and the sums are for $i,j=1,\ldots,d$.
For instance, to prove the first identity in \eqref{eq:estimparts}, we integrate by parts and use
$\partial_j\rho=\frac{2}{\sigma^2}f^j \rho$, where $\partial_j$ denotes the derivative $\partial/(\partial x_j)$,
\begin{eqnarray*}
\left\langle \phi''(f,f) \right\rangle &=& \int_{\IR^d} \sum_{i,j=1}^d
\partial_i\partial_j \phi f^i f^j
 \rho dy
=
-\int_{\IR^d} \sum_{i,j=1}^d
\partial_i \phi \partial_j(f^if^j
 \rho) dy\\
&=&-\left\langle  \sum_{i,j=1}^d
\partial_i \phi (f^i\partial_jf^j
+\partial_jf^if^j
 +f^if^j
 \frac{2}{\sigma^2}f^j) 
\right\rangle\\
&=&\left\langle  
-\phi' 
(f'f + \ddiv (f) f + \frac2{\sigma^2} \|f\|^2 f) 
\right\rangle.
\end{eqnarray*}
The other identities in \eqref{eq:estimparts}  can be proved analogously.
We deduce
\begin{eqnarray*}
\left\langle A_1\phi\right\rangle
&=& \left\langle\Big(a^2+2\theta a\Big) \frac{\sigma^2}2\phi' \sum_{i=1}^d f''(e_i,e_i)
+ {\Big(-\frac14 + a +\frac12\theta\Big) } \sigma^2\sum_{i=1}^d \phi''(f'e_i,e_i) \right\rangle
\end{eqnarray*}
To improve the accuracy of the scheme \eqref{eq:theta}, 
we now consider the postprocessor \eqref{eq:postproc} 
where $b$ and $c\ge 0$ are fixed parameters. 
Using $\overline A_1 \phi= b\phi' f + \frac{c^2}2 \sigma^2 \Delta \phi$, we have
$$
[\mathcal{L},\overline A_1]\phi = \frac{(b-c^2)}2 \sigma^2\phi' \sum_{i=1}^d f''(e_i,e_i)
+ {(b-c^2) \sigma^2} \sum_{i=1}^d \phi''(f'e_i,e_i)
$$
Summing up then yields the value of $\left\langle A_1\phi + [\mathcal{L},\overline A_1]\phi\right\rangle$,
\begin{equation*}
\left\langle\Big(a^2+2\theta a + b-c^2\Big) \frac{\sigma^2}2\phi' \sum_{i=1}^d f''(e_i,e_i)
+ {\Big(-\frac14 + a +\frac12\theta + b-c^2\Big) } \sigma^2\sum_{i=1}^d \phi''(f'e_i,e_i) \right\rangle
\end{equation*}
and this quantity is zero if \eqref{eq:ordercond} holds.
We conclude the proof of Theorem \ref{thm:thetameth} using Theorem \ref{thm:main}.
\qed

\bibliographystyle{abbrv}
\bibliography{abd_biblio,complete1,complete,HLW}

\def\cprime{$'$} \def\cprime{$'$} \def\cprime{$'$} \def\cprime{$'$}
\begin{thebibliography}{10}

\bibitem{ACV12}
A.~Abdulle, D.~Cohen, G.~Vilmart, and K.~C. Zygalakis.
\newblock High order weak methods for stochastic differential equations based
  on modified equations.
\newblock {\em SIAM J. Sci. Comput.}, 34(3):1800--1823, 2012.

\bibitem{AVZ13b}
A.~Abdulle, G.~Vilmart, and K.~C. Zygalakis.
\newblock Mean-square {A}-stable diagonally drift-implicit integrators with
  high order for stiff {I}to systems of stochastic differential equations with
  noncommutative noise.
\newblock {\em BIT}, 53:827--840, 2013.

\bibitem{AVZ14a}
A.~Abdulle, G.~Vilmart, and K.~C. Zygalakis.
\newblock High order numerical approximation of the invariant measure of
  ergodic {SDE}s.
\newblock {\em SIAM J. Numer. Anal.}, 52(4):1600--1622, 2014.

\bibitem{AVZ14b}
A.~Abdulle, G.~Vilmart, and K.~C. Zygalakis.
\newblock Long time accuracy of {L}ie-{T}rotter splitting methods for second
  order stochastic dynamics.
\newblock {\em to appear in SIAM J. Numer. Anal.}, 2014.

\bibitem{BRO10}
N.~Bou-Rabee and H.~Owhadi.
\newblock Long-run accuracy of variational integrators in the stochastic
  context.
\newblock {\em SIAM J. Numer. Anal.}, 48(1):278--297, 2010.

\bibitem{BuC10}
E.~Buckwar and C.~Kelly.
\newblock Towards a systematic linear stability analysis of numerical methods
  for systems of stochastic differential equations.
\newblock {\em SIAM J. Numer. Anal.}, 48(1):298--321, 2010.

\bibitem{BBT04}
K.~Burrage, P.~Burrage, and T.~Tian.
\newblock Numerical methods for strong solutions of stochastic differential
  equations: an overview.
\newblock {\em Proc. R. Soc. Lond. Ser. A Math. Phys. Eng. Sci.},
  460(2041):373--402, 2004.

\bibitem{BuB12}
K.~Burrage and P.~M. Burrage.
\newblock Low rank {R}unge-{K}utta methods, symplecticity and stochastic
  {H}amiltonian problems with additive noise.
\newblock {\em J. Comput. Appl. Math.}, 236(16):3920--3930, 2012.

\bibitem{BuL09}
K.~Burrage and G.~Lythe.
\newblock Accurate stationary densities with partitioned numerical methods for
  stochastic differential equations.
\newblock {\em SIAM J. Numer. Anal.}, 47(3):1601--1618, 2009.

\bibitem{butcher69teo}
J.~C. Butcher.
\newblock The effective order of {R}unge-{K}utta methods.
\newblock In J.~L. Morris, editor, {\em Proceedings of Conference on the
  Numerical Solution of Differential Equations}, volume 109 of {\em Lecture
  Notes in Math.}, pages 133--139, 1969.

\bibitem{DeF12}
A.~Debussche and E.~Faou.
\newblock Weak backward error analysis for {SDE}s.
\newblock {\em SIAM J. Numer. Anal.}, 50(3):1735--1752, 2012.

\bibitem{GAR85}
C.~W. Gardiner.
\newblock {\em Handbook of stochastic methods}.
\newblock Springer-Verlag, Berlin, second edition, 1985.
\newblock For physics, chemistry and the natural sciences.

\bibitem{HLW06}
E.~Hairer, C.~Lubich, and G.~Wanner.
\newblock {\em Geometric Numerical Integration. Structure-Preserving Algorithms
  for Ordinary Differential Equations}.
\newblock Springer Series in Computational Mathematics 31. Springer-Verlag,
  Berlin, second edition, 2006.

\bibitem{HaW96}
E.~Hairer and G.~Wanner.
\newblock {\em Solving ordinary differential equations II. Stiff and
  differential-algebraic problems}.
\newblock Springer-Verlag, Berlin and Heidelberg, 1996.

\bibitem{Has80}
R.~Hasminskii.
\newblock {\em Stochastic stability of differential equations}.
\newblock Sijthoff and Noordhoff, The Netherlands, 1980.

\bibitem{Hig00b}
D.~J. Higham.
\newblock A-stability and stochastic mean-square stability.
\newblock {\em BIT}, 40:404--409, 2000.

\bibitem{Hig00}
D.~J. Higham.
\newblock Mean-square and asymptotic stability of the stochastic theta method.
\newblock {\em SIAM J. Numer. Anal.}, 38(3):753--769, 2000.

\bibitem{Kop13a}
M.~{Kopec}.
\newblock {Weak backward error analysis for Langevin process}.
\newblock {\em Preprint}, 2013.

\bibitem{Kop13b}
M.~Kopec.
\newblock Weak backward error analysis for overdamped langevin processes.
\newblock {\em IMA J.\ Numer.\ Anal.}, 2014.

\bibitem{LM13}
B.~Leimkuhler and C.~Matthews.
\newblock Rational construction of stochastic numerical methods for molecular
  sampling.
\newblock {\em Applied Mathematics Research eXpress}, 2013(1):34--56, 2013.

\bibitem{LMS13}
B.~Leimkuhler, C.~Matthews, and G.~Stoltz.
\newblock The computation of averages from equilibrium and nonequilibrium
  {L}angevin molecular dynamics.
\newblock {\em to appear in IMA J.\ Numer.\ Anal.}, 2014.

\bibitem{LMT14}
B.~Leimkuhler, C.~Matthews, and M.~V. Tretyakov.
\newblock On the long-time integration of stochastic gradient systems.
\newblock {\em Proc. R. Soc. A}, 470(2170), 2014.

\bibitem{LeR04}
B.~Leimkuhler and S.~Reich.
\newblock {\em Simulating {H}amiltonian {D}ynamics}.
\newblock Cambridge Monographs on Applied and Computational Mathematics 14.
  Cambridge University Press, Cambridge, 2004.

\bibitem{LAE08}
T.~Li, A.~Abdulle, and W.~E.
\newblock Effectiveness of implicit methods for stiff stochastic differential
  equations.
\newblock {\em Commun. Comput. Phys.}, 3:295--307, 2008.

\bibitem{MSH02}
J.~Mattingly, A.~Stuart, and D.~Higham.
\newblock Ergodicity for {SDEs} and approximations: locally {L}ipschitz vector
  fields and degenerate noise.
\newblock {\em Stochastic Processes and their Applications}, 101(2):185 -- 232,
  2002.

\bibitem{Mil86}
G.~Milstein.
\newblock Weak approximation of solutions of systems of stochastic differential
  equations.
\newblock {\em Theory Probab. Appl.}, 30(4):750--766, 1986.

\bibitem{MiT04}
G.~Milstein and M.~Tretyakov.
\newblock {\em Stochastic numerics for mathematical physics}.
\newblock Scientific Computing. Springer-Verlag, Berlin and New York, 2004.

\bibitem{MT07}
G.~Milstein and M.~Tretyakov.
\newblock Computing ergodic limits for {L}angevin equations.
\newblock {\em Phys. D: Nonlinear Phenomena}, 229(1):81 -- 95, 2007.

\bibitem{MT05}
G.~N. Milstein and M.~V. Tretyakov.
\newblock Numerical integration of stochastic differential equations with
  nonglobally lipschitz coefficients.
\newblock {\em SIAM J. Numer. Anal.}, 43(3):1139--1154, Mar. 2005.

\bibitem{RaB08}
A.~Rathinasamy and K.~Balachandran.
\newblock Mean-square stability of second-order {R}unge-{K}utta methods for
  multi-dimensional linear stochastic differential systems.
\newblock {\em J. Comput. Appl. Math.}, 219(1):170--197, 2008.

\bibitem{RIS84}
H.~Risken.
\newblock {\em The {F}okker-{P}lanck equation}, volume~18 of {\em Springer
  Series in Synergetics}.
\newblock Springer-Verlag, Berlin, 1989.

\bibitem{SaM96}
Y.~Saito and T.~Mitsui.
\newblock Stability analysis of numerical schemes for stochastic differential
  equations.
\newblock {\em SIAM J. Numer. Anal.}, 33:2254--2267, 1996.

\bibitem{SaM02}
Y.~Saito and T.~Mitsui.
\newblock Mean-square stability of numerical schemes for stochastic
  differential systems.
\newblock {\em Vietnam J. Math.}, 30(suppl.):551--560, 2002.

\bibitem{San14}
J.~M. Sanz-Serna.
\newblock {\em Markov Chain Monte Carlo and Numerical Differential Equation}.
\newblock Lectures Notes in Mathematics 2082. Springer, Switzerland, 2014.

\bibitem{SaC94}
J.~M. Sanz-Serna and M.~P. Calvo.
\newblock {\em Numerical {H}amiltonian problems}, volume~7 of {\em Applied
  Mathematics and Mathematical Computation}.
\newblock Chapman \& Hall, London, 1994.

\bibitem{Sch99b}
H.~Schurz.
\newblock Preservation of probabilistic laws through {E}uler methods for
  {O}rnstein-{U}hlenbeck process.
\newblock {\em Stochastic Anal. Appl.}, 17(3):463--486, 1999.

\bibitem{TaT90}
D.~Talay and L.~Tubaro.
\newblock Expansion of the global error for numerical schemes solving
  stochastic differential equations.
\newblock {\em Stochastic Anal. Appl.}, 8(4):483--509 (1991), 1990.

\bibitem{Toc05}
A.~Tocino.
\newblock Mean-square stability of second-order {R}unge-{K}utta methods for
  stochastic differential equations.
\newblock {\em J. Comput. Appl. Math.}, 175(2):355--367, 2005.

\bibitem{Zyg11}
K.~C. Zygalakis.
\newblock On the existence and the applications of modified equations for
  stochastic differential equations.
\newblock {\em SIAM J. Sci. Comput.}, 33(1):102--130, 2011.

\end{thebibliography}

\end{document}